\title{Discovering Dynamics with Kolmogorov–Arnold Networks: Linear Multistep Method-Based Algorithms and Error Estimation} 
\shorttitle{DISCOVERING DYNAMICS WITH KAN}
\author[1]{Jintao Hu}
\author[1]{Hongjiong Tian}
\author[1,\cormark]{Qian Guo}
\affil[1]{Department of Mathematics, Shanghai Normal University, P. R. China}
\date{}
\begin{document}

\maketitle

\begin{abstract}
Uncovering the underlying dynamics from observed data is a critical task in various scientific fields. Recent advances have shown that combining deep learning techniques with linear multistep methods (LMMs) can be highly effective for this purpose. In this work, we propose a novel framework that integrates Kolmogorov–Arnold Networks (KANs) with LMMs for the discovery and approximation of dynamical systems' vector fields. Specifically, we begin by establishing precise error bounds for two-layer B-spline KANs when approximating the governing functions of dynamical systems. Leveraging the approximation capabilities of KANs, we demonstrate that for certain families of LMMs, the total error is constrained within a specific range that accounts for both the method’s step size and the network's approximation accuracy. Additionally, we analyze the difference between the numerical solution obtained from solving the ordinary differential equations with the fitted vector fields and the true solution of the dynamical system. To validate our theoretical results, we provide several numerical examples that highlight the effectiveness of our approach.

\medskip
\noindent{\bfseries Keywords}: Kolmogorov-Arnold network, linear multistep method, discovery of dynamics, error estimation, Vapnik-Chervonenkis (VC) dimension
\end{abstract}

\section{Introduction}

Dynamical systems are essential for understanding and predicting the evolution of physical phenomena. They encompass a spectrum of mathematical descriptions, from the simplicity of harmonic motion to the intricacies of fluid dynamics and turbulence, providing analytical tools crucial for assessing the historical dependence of system states and forecasting their future trajectories. Typically articulated through exact differential equations rooted in fundamental physical laws—such as conservation of energy, mass, and momentum-traditional first-principles approaches face limitations when applied to highly complex systems. In an era abundant with data and witnessing rapid strides in machine learning, a formidable challenge arises: the possibility of extracting detailed mathematical models directly from data for dynamical systems. This problem holds significant implications not only for theoretical physics and applied mathematics but also poses new challenges for engineering practice and the field of data science \cite{Agarwal2000,BenettinGiorgilli1994,E2017,Thie1968}.

This problem can be summarized as follows: given certain discrete values of the state equation, how can we automatically discover complex dynamical systems from this data? Approaches to system identification generally fall into three major categories: model-driven methods, data-driven methods, and hybrid methods combining both. Data-driven methods focus more on capturing complex relationships, whereas model-driven methods prioritize the interpretability of the system structure. Hybrid methods aim to strike a balance between accuracy and physical consistency. In most real-world scenarios, methods typically involve a combination of these approaches. Representative methods include Gaussian processes, symbolic regression, sparse regression, statistical learning, and linear multistep methods (LMMs) using neural networks.

Gaussian processes \cite{KocijanGirardBankoMurray-Smith2005, RaissiPerdikarisKarniadakis2017, RaissiPerdikarisKarniadakis2017a} adeptly model system states or derivative output relationships using Gaussian distributions, optimizing hyperparameters by maximizing marginal log-likelihood to facilitate system parameter inference. While suitable for low to moderate-dimensional problems, they often require sparse approximations in high-dimensional settings and certain restrictions are imposed on the system form, typically used for parameter estimation in simpler systems \cite{ZhangLin2021}. Symbolic regression \cite{BongardLipson2007,SchmidtLipson2009} generates and refines symbolic models to fit observed data, providing physically meaningful functional forms for equation modeling \cite{RudyNathanKutzBrunton2019}. However, for large or complex systems, it can be computationally expensive and prone to overfitting, often necessitating additional regularization techniques to ensure generalization capability. Sparse regression \cite{BruntonProctorKutz2016,RudyBruntonProctorKutz2017,ShengGuang2018} approximates system control or state functions by identifying a sparse combination from a candidate basis function set, with coefficients determined by sparse regression algorithms. This approach offers an explicit system formula with minimal prior knowledge requirements \cite{ZhangLin2021}, but its efficiency may be compromised or it may fail to accurately fit complex dynamical models if the system lacks a simple or sparse representation \cite{LuZhongTangMaggioni2019,LongLuDong2019,RudyNathanKutzBrunton2019}. Statistical learning methods \cite{LuZhongTangMaggioni2019,ZhongMillerMaggioni2020} minimize empirical error to select an appropriate kernel function within a hypothesis space, learning the system's interaction relationships. This approach mitigates the curse of dimensionality to some extent and can be used to identify interaction patterns in high-dimensional systems \cite{LuZhongTangMaggioni2019}, but it is limited to dynamical systems that can be approximately represented through kernel functions \cite{FengKulickRenTang2024}. LMMs with neural networks \cite{RaissiPerdikarisKarniadakis2018,TipireddyPerdikarisStinisTartakovsky2019,XieZhangWebster2019} approximate control functions via neural networks, identifying control functions by minimizing the residuals of the discretized dynamical system through LMMs. This approach can achieve high convergence orders and handle complex or high-dimensional systems, as demonstrated in \cite{LongLuDong2019,RudyNathanKutzBrunton2019,ZhangLin2021}. The stability and convergence of dynamic system identification have been thoroughly analyzed in \cite{DuGuYangZhou,DDLMM2021}, where the relationship between the overall error, the error of the approximating network, and the residual of the LMM-discretized dynamic system was established.

In light of these considerations, this study delves into the convergence theory of the LMMs framework integrated with deep learning, employing Kolmogorov–Arnold Networks (KANs) as the neural network model. Through meticulous error analysis of KANs, we derive the discrepancy between the numerical solution $\boldsymbol{x}_{\mathcal{NN}}$ obtained from the learned network and the target state function $\boldsymbol{x}$.

The structure of the paper is outlined as follows: Sections 2.1 and 2.2 provide an introduction to the background of Kolmogorov–Arnold Networks (KANs) and the construction of the proposed network. Sections 2.3 and 2.4 detail the upper and lower error bounds for the proposed two-layer B-spline KANs, respectively. Section 3 offers an overview of dynamical systems and the foundational knowledge of Linear Multistep Methods (LMMs), followed by a description of the application of LMMs in dynamic identification. Section 4 presents a comprehensive error analysis of the LMMs when applied in conjunction with the two-layer B-spline KANs. Numerical experiments to validate the theoretical error results are conducted in Section 5.

\section{B-Spline KANs Approximation}
The theoretical foundation of traditional Multi-layer Perceptrons (MLPs) is often ascribed to the universal approximation theorem. This study, however, pivots attention towards the Kolmogorov-Arnold representation theorem, which serves as the theoretical basis for a neural network known as KANs. Through an in-depth analysis of the network's constituent basis functions, this research delineates a rigorous quantification of the approximation error within a two-layer KANs framework employing B-spline functions. The approximation is further grounded in Linear Multistep Methods (LMMs) for trajectory-based dynamical system identification, incorporating implicit regularization.  Moreover, this study employs the concept of Vapnik-Chervonenkis (VC) dimension from computational learning theory to ascertain the minimum error bound for networks that leverage B-spline basis functions.

\subsection{Kolmogorov-Arnold theorem}

Braun and Griebel \cite{BraunGriebel2009} have elucidated that any continuous multivariate function $f$ defined on a bounded domain can be expressed as a finite sum of continuous univariate functions, with addition as the combining operation. In essence, this representation technique is applicable to continuous functions within the specified domain.

\begin{lemma}[Kolmogorov-Arnold Representation theorem \cite{BraunGriebel2009}]\label{lem:KAT}
Let $f: [0,1]^{d} \rightarrow \mathbb{R}$ be an arbitrary multivariate continuous function. Then it has the representation
\begin{equation}\label{eq:KAT}
f(\boldsymbol{x})=\sum_{q=0}^{2d}\phi_q\biggl(\sum_{p=1}^d \psi_{q,p}(x_p)\biggr) \quad \text{for any} ~ \boldsymbol{x}=(x_1,\ldots,x_d)\in[0,1]^d
\end{equation}
with continuous one-dimensional outer and inner functions $\phi_q$ and $\psi_{q, p}$. All these functions $\phi_q$, $\psi_{q, p}$ are defined on the real line. The inner functions $\psi_{q, p}$ are independent of the function $f$.
\end{lemma}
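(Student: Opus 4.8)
The plan is to prove Lemma~\ref{lem:KAT} along the classical route of Kahane and Lorentz (as in \cite{BraunGriebel2009} and the references therein), splitting the argument into a \emph{universal} construction of the inner functions $\psi_{q,p}$ (which, being independent of $f$, must be built first and once for all) and an \emph{iterative} scheme that produces the outer functions $\phi_q$ for a given $f$. First I would reduce the problem: by a density argument it suffices to exhibit one admissible family $\{\psi_{q,p}\}$, and I would seek the $\psi_{q,p}$ among strictly increasing continuous functions on $[0,1]$, normalized so that $\Phi_q(\boldsymbol{x}) := \sum_{p=1}^d \psi_{q,p}(x_p)$ maps $[0,1]^d$ into a fixed interval. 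For a prescribed sequence of dyadic-type decompositions of the cube into small subcubes, I would then "fatten" the cells to obtain, for each $q\in\{0,\dots,2d\}$, an open cover $\mathcal{U}_q$ of $[0,1]^d$ by translated cells, arranged (this is the combinatorial core) so that every point of the cube lies in the \emph{separated interior} of a cell for at least $d+1$ of the $2d+1$ indices $q$.

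Second, I would pin down the property the inner functions must satisfy: on each cell of $\mathcal{U}_q$ the map $\Phi_q$ should be almost constant, and it should take pairwise well-separated values on distinct cells of $\mathcal{U}_q$. For each fixed grid scale this "separation" requirement is an open and dense condition on the tuple $(\psi_{q,p})$ in the Banach space $(C[0,1])^{(2d+1)d}$ of monotone functions; intersecting over the countably many scales, the Baire category theorem yields a residual, hence nonempty, set of admissible inner functions. These $\psi_{q,p}$ are fixed from now on and involve no reference to $f$, which is exactly the last assertion of the lemma.

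Third, with the $\Phi_q$ in hand I would run the successive-approximation step. Given continuous $f$ with $\norm{f}_\infty\le 1$, choose a grid scale fine enough that $f$ varies by at most $\varepsilon$ on each cell; on each cell $C$ of $\mathcal{U}_q$ set $\phi_q$ equal to roughly $\frac{1}{2d+1}$ times the value of $f$ at the center of $C$, interpolated to a continuous function of the real variable $\Phi_q$ (possible precisely because the cell-values of $\Phi_q$ are well separated). Using that each point is "good" for at least $d+1$ of the $q$'s — so the good terms reproduce a fraction $\ge \frac{d+1}{2d+1} > \frac{1}{2}$ of $f$, while the few bad terms and the cross-cell overlaps contribute a controllable error — one obtains $\norm{f - \sum_{q=0}^{2d}\phi_q\circ\Phi_q}_\infty \le \theta$ for some fixed $\theta<1$ depending only on $d$. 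Applying this to the residual $f-\sum_q\phi_q\circ\Phi_q$ repeatedly and summing, the series $\sum_k \phi_q^{(k)}$ converges uniformly to a continuous $\phi_q$, which yields the exact representation \eqref{eq:KAT}.

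The main obstacle is the combinatorial–topological lemma underlying the first two steps: simultaneously arranging the $2d+1$ overlapping grid covers so that a strict majority of them separates every point of the cube, and then verifying that the matching separation property for the $\Phi_q$ is generic across \emph{all} grid scales at once. Once that genericity statement is secured via Baire category, the approximation iteration and the convergence of the resulting geometric series are routine.
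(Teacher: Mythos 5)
The paper offers no proof of Lemma~\ref{lem:KAT}: it is imported verbatim from Braun and Griebel \cite{BraunGriebel2009} as a known classical result, so there is no in-paper argument to compare yours against. That said, your outline is a faithful sketch of the standard Kahane--Lorentz route: universal monotone inner functions obtained by a Baire-category genericity argument over countably many grid scales, the combinatorial covering lemma guaranteeing that each point of the cube lies in the separated interior of a cell for at least $d+1$ of the $2d+1$ shifted covers, and the contraction step $\norm{f-\sum_{q}\phi_q\circ\Phi_q}_\infty\le\theta\norm{f}_\infty$ with $\theta<1$ depending only on $d$, iterated and summed into uniformly convergent series defining the outer functions. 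Two caveats are worth recording. First, this is a genuinely different route from the cited source: Braun and Griebel follow Sprecher's fully \emph{constructive} variant, in which the inner functions are given by explicit formulas (so their independence of $f$ is immediate by construction rather than certified by a residual-set existence argument); the constructive version is what makes the theorem usable as a template for KAN architectures, whereas the Baire-category argument only establishes existence. Second, your sketch openly defers the two hard points --- arranging the $2d+1$ overlapping covers so that a strict majority separates every point, and verifying that the required separation property of the $\Phi_q$ is open and dense at all scales simultaneously --- and these are precisely where essentially all of the difficulty of the classical proof resides, so as written this is a correct roadmap rather than a complete proof.
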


The foundational role of addition is emphasized by the demonstration that any multivariate function can be represented as a combination of univariate functions through addition. This finding may be seen as encouraging news for machine learning, suggesting that the challenge of learning high-dimensional functions could be simplified to learning a polynomial number of one-dimensional functions, potentially easing the complexity of the task. However, the situation is complicated by the fact that these one-dimensional functions may exhibit non-smooth or fractal characteristics, which pose significant challenges for their learnability in practice \cite{Kurkova1991,PoggioBanburskiLiao2020}, particularly for optimization algorithms. As a result, the Kolmogorov-Arnold representation theorem has largely been overlooked in the field of machine learning, viewed as theoretically sound yet often lacking practical applicability \cite{Kurkova1991,PoggioBanburskiLiao2020}.

However, we believe in the significant potential of the Kolmogorov-Arnold theorem for machine learning. First of all, we are not limited to the original formulation \eqref{eq:KAT}, which features a two-layer structure with nonlinearities and a limited number of terms, specifically $(2d + 1)$, in the hidden layer.
We simplify the multi-layer KANs proposed in \cite{LiuWangVaidyaRuehleEtAl2024} by considering a two-layer KANs where the first layer has a width of dimension $d$ and the second layer has a width of $N$. This allows us to obtain a precise estimation of the error for our network. The specific construction of the network will be detailed in the next subsection.

\subsection{B-Spline KANs framework}
Consider a scenario within the realm of supervised learning where the objective is to approximate a function \( f \) such that for a given set of input-output pairs \(\{\boldsymbol{x}_i, y_i\}\), the relationship \( y_{i} \approx f(\boldsymbol{x}_i) \) holds for every data point. \autoref{lem:KAT} suggests that this objective can be achieved by identifying suitable univariate functions \(\psi_{q, p}\) and \(\phi_q\). Kolmogorov-Arnold Networks (KANs), a class of neural networks predicated on the Kolmogorov-Arnold representation theorem, offer a novel approach to this end. According to the theorem, any continuous multivariate function \( f(\boldsymbol{x}) = f(x_1, x_2, \ldots) \) defined over a finite domain can be decomposed into a finite combination of continuous univariate functions, with addition serving as the binary operation that combines these elements.

The approximation for $f(\boldsymbol{x})$ was proposed in \cite{LiuWangVaidyaRuehleEtAl2024} as follows:
\begin{equation}\label{eq:KANa}
f(\boldsymbol{x}) = \sum_{i_{L-1}=1}^{n_{L-1}} \phi_{L-1,i_{L},i_{L-1}} \left( \sum_{i_{L-2}=1}^{n_{L-2}} \cdots \left( \sum_{i_1=1}^{n_1}\phi_{1,i_1,i_0} \left( \sum_{i_0=1}^{n_0} \phi_{0,i_1,i_0}(x_{i_0}) \right) \right)  \cdots \right).
\end{equation}
The deep network in question is designed as a generalization of the Kolmogorov-Arnold representation theorem. Below, we will explain the formulas and concepts that appear in the equations, simplify them to obtain our two-layer B-spline KANs, and discuss the details of our networks implementation.

The main notations are listed as follows.
\begin{itemize}
  \item Vectors and matrices are denoted in a bold font. Standard vectorization is adopted in the matrix and vector computation. For example, adding a scalar and a vector means adding the scalar to each entry of the vector.
  \item KAN layer with $d_{in}$-dimensional inputs and $d_{out}$-dimensional outputs can be defined as a matrix of 1D functions
      \begin{equation}\label{2.1}
    \boldsymbol{\Phi}=\left\{\phi_{q,p}\right\},~~p=1,2,\ldots,d_{in}, ~ q=1,2, \ldots,d_{out},
      \end{equation}
      where the functions $\phi_{q,p}$ have trainable parameters, as detaild below. In the Kolmogov-Arnold theorem, the inner functions form a KAN layer with $d_{in} = d$ and $d_{out} = 2d + 1$, and the outer functions form a KAN layer with $d_{in} = 2d + 1$ and $d_{out}=1$. So the Kolmogorov-Arnold representations in \eqref{eq:KAT} are simply compositions of two-layer KANs .
  \item The shape of a KAN is represented by an integer array
  $$\left[d_{0},d_{1},\ldots,d_{L}\right],$$
  where $d_{i}$ is the number of nodes in the $i^{th}$ layer of the computational graph. We denote the $i^{th}$  neuron in the $l^{th}$  layer by $(l,i)$, and the activation value of the $(l,i)$ -neuron by $x_{l,i}$. Between layer $l$ and layer $l+1$, there are $n_{l}n_{l+1}$ activation functions: the activation function that connects $(l,i)$ and $(l+1,j)$ is denoted by
  \begin{equation}\label{eq:act_fun}
    \phi_{l,j,i},~~ l=0,\ldots,L-1,~ i=1,\ldots,n_{l},~ j=1,\ldots, n_{l+1}.
  \end{equation}

  \item The pre-activation of $\phi_{l,j,i}$ is simply $x_{l,i}$, the post-activation of $\phi_{l,j,i}(x_{l,i})$ is denoted by $ \tilde{x}_{l,j,i} \equiv \phi_{l,j,i}(x_{l,i}) $. The activation value of the $(l+1,j)$ neuron is simply the sum of all incoming postactivations:
  \begin{equation*}
     x_{l+1,j}=\sum_{i=1}^{n_{l}}\tilde{x}_{l,j,i}=\sum_{i=1} ^{n_{l}}\phi_{l,j,i}(x_{l,i}), ~~j=1,\cdots,n_{l+1}.
  \end{equation*}
  In matrix form, this reads
    \begin{equation}
    \boldsymbol{x}_{l+1} = \underbrace{\left(\begin{array}{cccc}
    \phi_{l, 1,1}(\cdot) & \phi_{l,1,2}(\cdot) & \cdots & \phi_{l,1, n_l}(\cdot)\\
    \phi_{l, 2,1}(\cdot) & \phi_{l, 2, 2}(\cdot) & \cdots & \phi_{l, 2, n_l}(\cdot)\\
    \vdots & \vdots & & \vdots\\
    \phi_{l, n_{l+1}, 1}(\cdot) & \phi_{l, n_{l+1}, 2}(\cdot) & \cdots & \phi_{l, n_{l+1}, n_l}(\cdot)
    \end{array}\right)}_{\boldsymbol{\Phi_l}} \boldsymbol{x}_l,
    \end{equation}
 where $\boldsymbol{\Phi_l}$ is the function matrix corresponding to the $l^{th}$ KAN layer. A general KAN is a composition of $L$ layers: given an input vector $\boldsymbol{x}_0 \in R^{n_{0}}$, the output of KAN is
 \begin{equation}\label{eq:KAN}
   \operatorname{KAN}(\boldsymbol{x})=(\boldsymbol{\Phi_{L-1}} \circ \boldsymbol{\Phi_{L-2}} \circ \ldots \circ \boldsymbol{\Phi_{1}} \circ \boldsymbol{\Phi_{0}})\boldsymbol{x} .
 \end{equation}

\end{itemize}

For the existing multilayer KAN, the main purpose of this paper is to study the lower bound of the approximation error of any continuous function $f$ over a given finite interval using B-spline functions with a specified number of layers. This is related to the number of grid points
$G$ of the constructed B-spline basis functions, as well as the degree of the B-splines. Below, we will first introduce the structure of the two-layer B-spline KAN we are considering, and in the next section, we will provide an exact expression for our error lower bound and prove it.

\begin{figure}[t]
  \centering
  \includegraphics[width=1\linewidth]{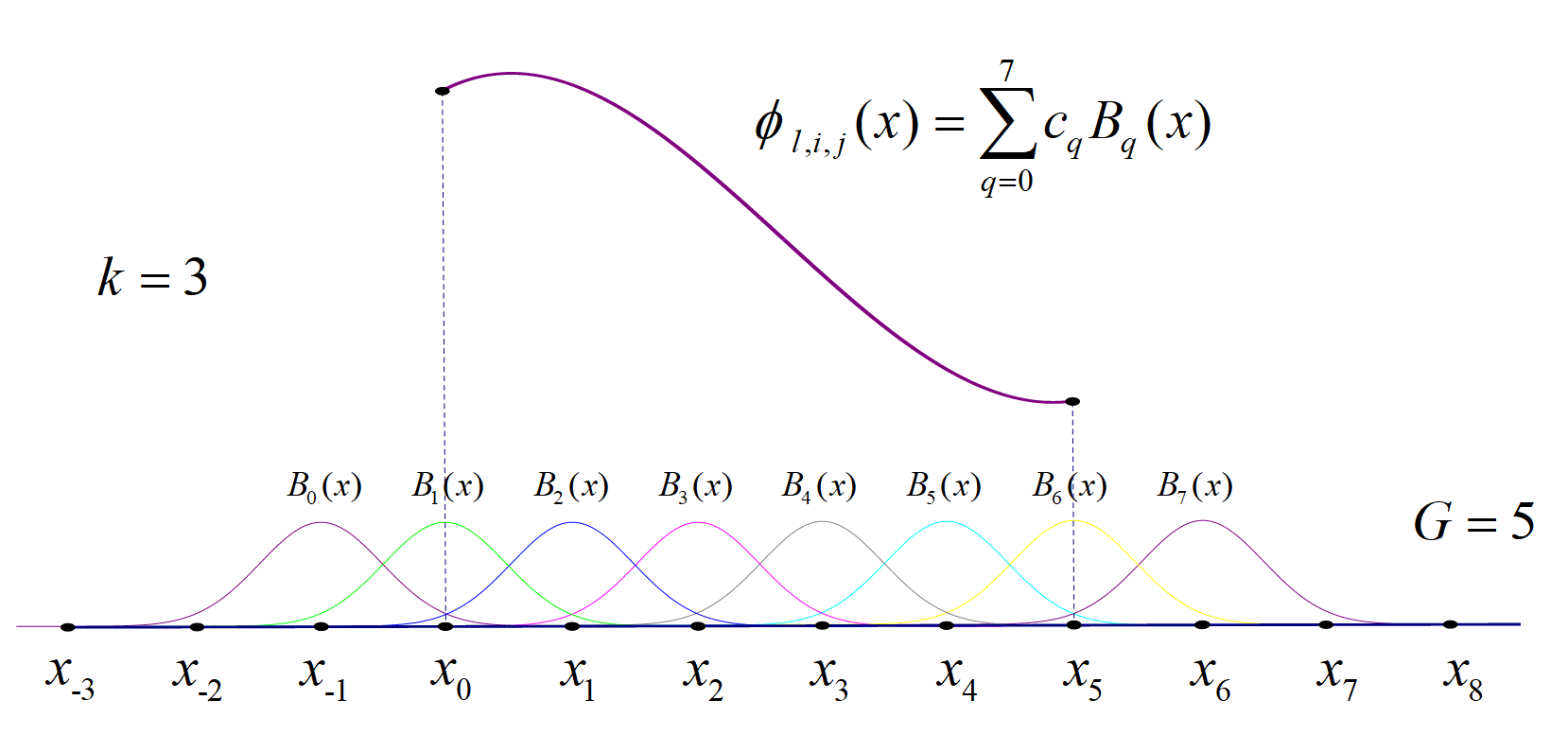}
  \caption{The activation function is parameterized by a set of B-spline basis functions.}
  \label{fig:Bspline1}
\end{figure}

\subsection{B-Spline KANs approximation}
Let's review the two-layer B-spline KANs earlier. Mathematically speaking,

\begin{equation}\label{Th:2layKAT}
\phi(\boldsymbol{x})=\sum_{j=1}^{N}\phi_{1,j} \biggl(\sum_{i=1}^d \phi_{0,j,i}(x_i)\biggr)\quad\mathrm{for~any~} \boldsymbol{x}=(x_1,\ldots,x_d)\in[0,1]^d,
\end{equation}
where $\phi_{0,j,i}$ represents the inner B-spline activation function, collectively denoted as $\Phi_0(\boldsymbol{x})$. $\phi_{1,j}$ denotes the outer B-spline activation function, also collectively denoted as \(\Phi_1(\boldsymbol{x})\), where each B-spline activation function is similarly represented by a set of basis functions, $\phi_{l,i,j}(\boldsymbol{x}) = \sum_{q=0}^{G+k-1} c_q B_q(x)$. $x_{l,i}$ represents the $i$-th element in the $l$-th layer, where $l = 1, 2$ and $i = 1, 2, \ldots, N$. Next, we will derive the approximation properties of the KANs for the above network structure. For convenience, we present all notations used throughout this dissertation.
\begin{definition}[Modulus of continuity]\label{Def:Modcontinuity}
Given $S\subseteq R^{d},$ we can define the modulus of continuity of a continuous function $f$ by
\begin{equation}\label{Def:modulus}
  \omega_f^{S}(r)=\sup\left\{|f(\boldsymbol{x})-f(\boldsymbol{y})|:\|\boldsymbol{x}-\boldsymbol{y}
  \|_2\leq r,\:\boldsymbol{x},\boldsymbol{y}\in S\right\} \quad\text{for any }r\geq0.
\end{equation}
Clearly, $\omega_{f}^{S}(cr)\leq c \omega_{f}^{S}(r)$ for any $c \in \mathbb{N}^{+}$ and $r \geq 0$.
\end{definition}

\begin{definition}[H\"older continuous functions]\label{Def:Holder_continuous}
Given $E\subseteq R^{d},$ a function $f: E \to \mathbb{R} $ is called Hölder continuous if there exist constants $\lambda > 0 $ and $\alpha \in  (0,1] $ such that for all $\boldsymbol{x},\boldsymbol{y} \in E$,
$$|f(\boldsymbol{x}) - f(\boldsymbol{y})| \leq \lambda \|\boldsymbol{x} - \boldsymbol{y}\|^\alpha,$$
which can be denoted as the class $ H_{\lambda}(C^{\alpha}(E)) $. Here, $\| \cdot \|$ represents the distance between $\boldsymbol{x}$ and $\boldsymbol{y}$ (typically the Euclidean distance). In particular, $\alpha=1$, the Hölder continuous function is referred to as a Lipschitz continuous function, meaning there exists a constant $L$ such that:
$$|f(\boldsymbol{x}) - f(\boldsymbol{y})| \leq L \|\boldsymbol{x} - \boldsymbol{y}\|\quad\text{for all}~ \boldsymbol{x}, \boldsymbol{y} \in E.$$
\end{definition}

\begin{lemma}[Extension of Continuous Functions \cite{ShenYangZhang2020}]\label{lem:Extension_of_Continuousf}
Suppose $f$ is a uniformly continuous function defined on a subset $E \subset S$, where $S$ is a metric space with a metric $d_{S}(\cdot , \cdot)$, then there exists a uniformly continuous function $g$ on $S$ such that $f(\boldsymbol{x})=g(\boldsymbol{x})$ for $\boldsymbol{x}\in E$ and $\omega_{f}^{E}(r)=\omega_{g}^{S}(r)$ for any  $r\geq 0$.
\end{lemma}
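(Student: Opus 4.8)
The plan is to construct the extension $g$ explicitly by a McShane--Whitney infimal-convolution formula and then verify its three required properties by hand. Write $\omega := \omega_f^E$ for brevity. Since $f$ is uniformly continuous on $E$, the function $\omega$ is finite-valued, nondecreasing, and satisfies $\omega(0^+)=0$; moreover (because the metric that $E$ inherits from $S$ admits intermediate points, as in our application where $E=[0,1]^d$) $\omega$ is subadditive, $\omega(r+s)\le\omega(r)+\omega(s)$. I would then set
\[
g(\boldsymbol{x}) := \inf_{\boldsymbol{z}\in E}\Bigl\{f(\boldsymbol{z}) + \omega\bigl(d_S(\boldsymbol{x},\boldsymbol{z})\bigr)\Bigr\},\qquad \boldsymbol{x}\in S .
\]

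First I would check that this infimum is a genuine real number: fixing a reference point $\boldsymbol{z}_0\in E$, the triangle inequality together with subadditivity of $\omega$ gives $f(\boldsymbol{z})+\omega(d_S(\boldsymbol{x},\boldsymbol{z}))\ge f(\boldsymbol{z}_0)-\omega(d_S(\boldsymbol{x},\boldsymbol{z}_0))$ uniformly in $\boldsymbol{z}\in E$, so $g$ is well defined on all of $S$. Next, $g$ agrees with $f$ on $E$: for $\boldsymbol{x}\in E$ the choice $\boldsymbol{z}=\boldsymbol{x}$ yields $g(\boldsymbol{x})\le f(\boldsymbol{x})$, while for every $\boldsymbol{z}\in E$ the definition of the modulus gives $f(\boldsymbol{z})+\omega(d_S(\boldsymbol{x},\boldsymbol{z}))\ge f(\boldsymbol{z})+|f(\boldsymbol{x})-f(\boldsymbol{z})|\ge f(\boldsymbol{x})$, hence $g(\boldsymbol{x})\ge f(\boldsymbol{x})$.

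The heart of the matter is the modulus estimate $\omega_g^S(r)\le\omega(r)$ for every $r\ge 0$. Given $\boldsymbol{x}_1,\boldsymbol{x}_2\in S$ with $d_S(\boldsymbol{x}_1,\boldsymbol{x}_2)\le r$ and $\varepsilon>0$, I would pick $\boldsymbol{z}\in E$ that is $\varepsilon$-optimal in the infimum defining $g(\boldsymbol{x}_2)$; then $g(\boldsymbol{x}_1)-g(\boldsymbol{x}_2)\le \omega(d_S(\boldsymbol{x}_1,\boldsymbol{z}))-\omega(d_S(\boldsymbol{x}_2,\boldsymbol{z}))+\varepsilon$, and since $d_S(\boldsymbol{x}_1,\boldsymbol{z})\le r+d_S(\boldsymbol{x}_2,\boldsymbol{z})$, monotonicity and subadditivity of $\omega$ force $g(\boldsymbol{x}_1)-g(\boldsymbol{x}_2)\le\omega(r)+\varepsilon$; interchanging $\boldsymbol{x}_1,\boldsymbol{x}_2$ and letting $\varepsilon\to0$ gives $|g(\boldsymbol{x}_1)-g(\boldsymbol{x}_2)|\le\omega(r)$. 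Conversely, because $E\subseteq S$ and $g|_E=f$, the supremum defining $\omega_g^S(r)$ ranges over a superset of the pairs defining $\omega_f^E(r)$, so $\omega_g^S(r)\ge\omega_f^E(r)$; the two inequalities yield $\omega_g^S=\omega_f^E$, and in particular $\omega_g^S(r)\to0$ as $r\to0^+$, so $g$ is uniformly continuous on $S$.

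The step I expect to need the most care is the subadditivity of $\omega_f^E$, which underlies both the well-definedness of the infimum and the modulus estimate; this is automatic in the setting of interest since $E=[0,1]^d$ is convex (hence a length space in the induced metric), whereas for a fully general metric subset one would replace $\omega$ by a subadditive (e.g. least concave) majorant, which affects only a constant and not the structure of the proof. Everything else reduces to routine use of the triangle inequality and of the definition of the modulus of continuity.
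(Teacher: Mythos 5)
The paper offers no proof of this lemma at all --- it is imported verbatim from the cited reference [ShenYangZhang2020] --- so there is no in-paper argument to compare against. Your McShane--Whitney infimal convolution $g(\boldsymbol{x})=\inf_{\boldsymbol{z}\in E}\{f(\boldsymbol{z})+\omega(d_S(\boldsymbol{x},\boldsymbol{z}))\}$ is the standard construction for this kind of extension, and granted subadditivity of $\omega=\omega_f^E$, your three verifications (well-definedness of the infimum, $g|_E=f$, and the two-sided modulus estimate giving $\omega_g^S=\omega_f^E$) are all correct.

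The gap is the one you flag yourself, but it is more serious than ``a constant.'' For an arbitrary subset $E$ of a metric space, $\omega_f^E$ need not be subadditive, and in that generality the lemma as stated is simply false: take $S=\mathbb{R}$, $E=\{0,1\}$, $f(0)=0$, $f(1)=1$; then $\omega_f^E(1/2)=0$, yet any extension $g$ satisfies $\omega_g^{\mathbb{R}}(1/2)\ge 1/2$ since $1=|g(1)-g(0)|\le|g(1)-g(1/2)|+|g(1/2)-g(0)|$. So no argument can establish the claimed equality for general $E$, and your proposed repair --- replacing $\omega$ by its least concave majorant $\tilde\omega$ --- yields an extension with $\omega_g^S\le\tilde\omega$ but forfeits the stated identity $\omega_g^S=\omega_f^E$; nor is $\tilde\omega/\omega$ bounded by a universal constant for general $E$ (in the two-point example $\tilde\omega(1/2)/\omega(1/2)$ is infinite). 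Your proof is therefore complete precisely on the class of sets where the lemma is actually true, e.g.\ convex $E$ (where an intermediate point on the segment gives $\omega(r+s)\le\omega(r)+\omega(s)$); this covers $[0,1]^d$ but not automatically the general $E\subseteq[-R,R]^d$ to which \autoref{cor:upperbound_of_Bspline_R} applies the lemma. A further small point: finiteness of $\omega_f^E(r)$ for all $r$ does not follow from uniform continuity alone when $E$ is unbounded (consider $f(n)=n^2$ on $\mathbb{Z}$); you should either invoke boundedness of $E$ or derive finiteness from the same subadditivity you are already assuming.
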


\begin{lemma}[B-spline function error approximation \cite{deBoor1978}]\label{lem:upperbound_of_Bspline}
  Let $f(x)$ be a continuous functions on $[a,b]$, there exists a function $\phi(x) = \sum_{q=0}^{G+k-1} c_q B_q(x)$ implemented by B-spline basis functions $B_{i,k}(x)$ with degree $k$ of the piecewise B-spline polynomial and number of knots $G$ such that
\begin{equation}\label{eq:upperbound_of_Bspline}
  \|f(x)-\phi(x) \| \leq \omega_{f}\left(\min \left\{\frac{b-a}{\sqrt{2k-2}}, \sqrt{\frac{k}{12}} \cdot \frac{b-a}{G}\right\}\right),
\end{equation}
where $B_{i,k}(x)$  can be uniquely defined using a recurrence relation.
Frist, we have
$$B_{i,0}(x) =
\begin{cases}
1, & x_i \leq x < x_{i+1}, \\
0, & \text{otherwise}.
\end{cases}$$
Then we have
$$B_{i, k}(x)=\frac{x-x_{i}}{x_{i+k}-x_{i}} B_{i, k-1}(x)+\frac{x_{i+k+1}-x}{x_{i+k+1}-x_{i+1}} B_{i+1, k-1}(x)$$
and the basis functions constructed in this way are uniquely defined.
\end{lemma}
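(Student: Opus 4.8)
The plan is to take for $\phi$ a standard \emph{variation-diminishing} spline approximant (Schoenberg's quasi-interpolant), which is precisely the construction de Boor uses for bounds of this type: with $B_q=B_{q,k}$ the degree-$k$ B-splines on the given knot sequence, I would set $\phi(x)=\sum_{q=0}^{G+k-1}f(\xi_q)\,B_q(x)$, where $\xi_q$ is the $q$-th Greville abscissa, i.e.\ the average of the $k$ consecutive interior knots supporting $B_q$. This fixes the coefficients as $c_q=f(\xi_q)$ in the statement, so nothing remains but to estimate $\|f-\phi\|$. If $f$ is only given on $[a,b]$ I would first invoke \autoref{lem:Extension_of_Continuousf} to extend it without enlarging $\omega_f$, which allows the boundary knots to be placed with the usual multiplicity while keeping every $\xi_q$ in the domain.

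The core of the argument would use only two structural facts about the B-spline basis: $B_q\ge 0$ with $\sum_q B_q(x)\equiv 1$ (partition of unity), and $\sum_q\xi_q B_q(x)\equiv x$ (reproduction of linear polynomials, Marsden's identity). Fixing $x\in[a,b]$,
\begin{equation*}
  |f(x)-\phi(x)|=\Bigl|\sum_q\bigl(f(x)-f(\xi_q)\bigr)B_q(x)\Bigr|\le\sum_q\omega_f\bigl(|x-\xi_q|\bigr)B_q(x).
\end{equation*}
Using the estimate $\omega_f(t)\le(1+t/\delta)\,\omega_f(\delta)$ (a consequence of the inequality $\omega_f(cr)\le\lceil c\rceil\omega_f(r)$ recorded after \autoref{Def:Modcontinuity}) together with $\sum_q B_q(x)=1$ and the Cauchy--Schwarz inequality, the right-hand side is at most $\omega_f(\delta)\bigl(1+\delta^{-1}M(x)^{1/2}\bigr)$ with $M(x):=\sum_q(x-\xi_q)^2B_q(x)$; choosing $\delta=\sup_{x}M(x)^{1/2}$ then reduces the whole lemma to two upper bounds for the second moment $M$.

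For the two entries of the minimum I would estimate $M(x)$ in two complementary ways. Since at a fixed $x$ only the $k+1$ B-splines whose supports contain $x$ are nonzero, and the relevant Greville abscissae then lie in a block of $k+1$ consecutive knot intervals of $[a,b]$, a convexity/telescoping bound on $\sum_q(x-\xi_q)^2B_q(x)$ that ignores the actual mesh gives the knot-count--free estimate $M(x)\le(b-a)^2/(2k-2)$. Specialising instead to a (quasi-)uniform knot sequence of mesh $h=(b-a)/G$, one computes (the classical de Boor computation, whose value $\tfrac{k}{12}h^2$ can be read off from the variance of an Irwin--Hall-type distribution attached to $B_{q,k}$, with linear reproduction pinning the ``mean'' of $\{B_q(x)\}_q$ to $x$) the sharper estimate $M(x)\le\tfrac{k}{12}\bigl(\tfrac{b-a}{G}\bigr)^2$. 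Taking whichever of the two bounds for $\sup_x M(x)^{1/2}$ is smaller as the value of $\delta$ then yields \eqref{eq:upperbound_of_Bspline}.

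The step I expect to be the real obstacle is squeezing out the clean constant: the elementary chain above naturally produces a bound of the shape $C\,\omega_f(\cdot)$ rather than $\omega_f(\cdot)$ itself, and removing the extraneous factor requires the sharper device of replacing $\omega_f$ by its least concave majorant $\tilde\omega_f$ and applying Jensen's inequality directly, $\sum_q\omega_f(|x-\xi_q|)B_q(x)\le\tilde\omega_f\bigl(\sum_q|x-\xi_q|B_q(x)\bigr)\le\tilde\omega_f(M(x)^{1/2})$, or else invoking de Boor's refined pointwise analysis of the variation-diminishing operator (which is also what explains the precise appearance of $\sqrt{2k-2}$ and $\sqrt{k/12}$ rather than merely constants of that order). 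A secondary technical nuisance, already flagged, is the endpoint bookkeeping: the boundary knots must be chosen with multiplicity $k+1$ so that the partition-of-unity and linear-reproduction identities hold throughout $[a,b]$ and the abscissae $\xi_q$ never leave the interval on which the extended $f$ is defined.
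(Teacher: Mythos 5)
The paper does not actually prove this lemma: it is imported as a quoted result of de Boor, so there is no in-paper argument to compare yours against. Your sketch follows the canonical route behind that citation --- Schoenberg's variation-diminishing quasi-interpolant $\phi=\sum_q f(\xi_q)B_q$, the partition-of-unity and linear-reproduction identities, and reduction to the second moment $M(x)=\sum_q(x-\xi_q)^2B_q(x)$ --- and that is the right skeleton for a result of this shape.

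Two points remain genuinely open, and the first is the one you flagged but did not resolve: the constant. Both devices you propose for converting $\sum_q\omega_f(|x-\xi_q|)B_q(x)$ into a modulus evaluated at $M(x)^{1/2}$ lose a factor of $2$. The $(1+t/\delta)$ inequality gives $2\,\omega_f\bigl(M(x)^{1/2}\bigr)$ after optimizing $\delta$, and the Jensen route ends at $\tilde\omega_f\bigl(M(x)^{1/2}\bigr)$, where in general one only has $\omega_f\le\tilde\omega_f\le 2\omega_f$ --- you cannot silently replace $\tilde\omega_f$ by $\omega_f$ in the last step. So as written you prove \eqref{eq:upperbound_of_Bspline} with an extra absolute constant, not with constant $1$ as asserted; closing that gap needs a genuinely sharper pointwise analysis of the Schoenberg operator (or an acknowledgement that the lemma, and hence \autoref{thm:upperbound_of_Bspline} downstream, holds only up to such a constant). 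Second, the two second-moment bounds $M(x)\le (b-a)^2/(2k-2)$ and $M(x)\le \frac{k}{12}\bigl(\frac{b-a}{G}\bigr)^2$ are asserted rather than derived. The uniform-knot bound is plausible via the Irwin--Hall variance, but the order-versus-degree bookkeeping must be fixed ($k/12$ is the variance for \emph{order} $k$, i.e.\ degree $k-1$; for degree $k$ it is $(k+1)/12$), and the mesh-free bound $(b-a)^2/(2k-2)$ does not follow from the trivial estimate $M(x)\le(b-a)^2$ and from your ``convexity/telescoping'' remark alone; it needs an actual argument. Since the paper supplies no derivation to fall back on, these are precisely the steps that must be made explicit before the proof can be considered complete.
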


\begin{theorem}[Upper bound of two-layer B-spline KANs]\label{thm:upperbound_of_Bspline}
Let $f$ be a continuous functions on $[0,1]^d$, suppose that a function $f(\boldsymbol{x})$ admits a representation
$$ f(\boldsymbol{x})=(\boldsymbol{\Phi}_1\circ\boldsymbol{\Phi}_0)\boldsymbol{x},$$
for any $k,G \in \mathbb{N}^+$, there exists a function $\phi(\boldsymbol{x})=(\boldsymbol{\Phi}_{1}^{G}\circ \boldsymbol{\Phi}_{0}^{G})\boldsymbol{x}$, implemented by a two-layer B-spline KANs with degree $k$ of the piecewise B-spline polynomial and number of knots $G$ such that
\begin{equation}\label{eq:upperbound_of_BsplineKAN}
  \|f(\boldsymbol{x})-\phi(\boldsymbol{x}) \| \leq N(L_{G}d+1) \cdot \omega_{f}\left(\min \left\{\frac{1}{\sqrt{2k-2}}, \sqrt{\frac{k}{12}} \cdot \frac{1}{G}\right\}\right),
\end{equation}
where $d$ denotes the dimension of the input vector, $N$ represents the number of basis functions in the second-layer network, and $L_{G}$ is the Lipschitz constant of $\phi^{G}(\boldsymbol{x})$.
\end{theorem}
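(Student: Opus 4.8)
The plan is to propagate the one-dimensional B-spline estimate of Lemma~\ref{lem:upperbound_of_Bspline} through the two-layer composition $\boldsymbol{\Phi}_1\circ\boldsymbol{\Phi}_0$ by successive triangle inequalities, passing the inner-layer error through the outer layer via its Lipschitz constant. Throughout, write $\delta:=\min\{1/\sqrt{2k-2},\ \sqrt{k/12}\cdot 1/G\}$, and for $\boldsymbol{x}=(x_1,\dots,x_d)\in[0,1]^d$ set $z_j:=\sum_{i=1}^d\phi_{0,j,i}(x_i)$ and $\hat z_j:=\sum_{i=1}^d\phi_{0,j,i}^G(x_i)$, so that $f(\boldsymbol{x})=\sum_{j=1}^N\phi_{1,j}(z_j)$ and $\phi(\boldsymbol{x})=\sum_{j=1}^N\phi_{1,j}^G(\hat z_j)$.

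First I would pin down the domains. The inner functions $\phi_{0,j,i}$ act on $[0,1]$, whereas each outer function $\phi_{1,j}$ acts on the compact range of the aggregate $z_j$; using Lemma~\ref{lem:Extension_of_Continuousf} I would extend $\phi_{1,j}$ and, after an affine rescaling, regard it as a continuous function on $[0,1]$ with the same modulus of continuity, placing the B-spline knots on that interval (slightly enlarged so that the perturbed value $\hat z_j$ still lies inside). Applying Lemma~\ref{lem:upperbound_of_Bspline} on $[0,1]$ then produces B-splines $\phi_{0,j,i}^G$ and $\phi_{1,j}^G$ of degree $k$ with $G$ knots such that $\|\phi_{0,j,i}-\phi_{0,j,i}^G\|\le\omega_f(\delta)$ and $\|\phi_{1,j}-\phi_{1,j}^G\|\le\omega_f(\delta)$, where the modulus of each constituent univariate function is controlled by $\omega_f$ (as furnished by the representation hypothesis). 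Since each $\phi_{1,j}^G$ is a finite linear combination of degree-$k$ B-spline basis functions on a compact interval, it is Lipschitz; let $L_G$ be the common Lipschitz bound for the outer B-spline layer $\boldsymbol{\Phi}_1^G$ named in the statement.

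Next I would split the pointwise error term by term. For fixed $j$, inserting the intermediate value $\phi_{1,j}^G(z_j)$ gives $|\phi_{1,j}(z_j)-\phi_{1,j}^G(\hat z_j)|\le|\phi_{1,j}(z_j)-\phi_{1,j}^G(z_j)|+|\phi_{1,j}^G(z_j)-\phi_{1,j}^G(\hat z_j)|\le\omega_f(\delta)+L_G|z_j-\hat z_j|$, while $|z_j-\hat z_j|\le\sum_{i=1}^d|\phi_{0,j,i}(x_i)-\phi_{0,j,i}^G(x_i)|\le d\,\omega_f(\delta)$ by Lemma~\ref{lem:upperbound_of_Bspline}. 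Summing these bounds over $j=1,\dots,N$ and using $|f(\boldsymbol{x})-\phi(\boldsymbol{x})|\le\sum_{j}|\phi_{1,j}(z_j)-\phi_{1,j}^G(\hat z_j)|$ yields
\[
|f(\boldsymbol{x})-\phi(\boldsymbol{x})|\le\sum_{j=1}^N\bigl(\omega_f(\delta)+L_G d\,\omega_f(\delta)\bigr)=N(L_G d+1)\,\omega_f(\delta),
\]
and taking the supremum over $\boldsymbol{x}\in[0,1]^d$ gives \eqref{eq:upperbound_of_BsplineKAN}.

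The main obstacle is exactly the domain-and-modulus bookkeeping compressed into the second paragraph. The outer functions are defined only on the a priori unknown ranges of the inner aggregates, so one has to certify that both $z_j$ and $\hat z_j$ fall in the interval on which $\phi_{1,j}^G$ is built, and that the moduli $\omega_{\phi_{1,j}}$ and $\omega_{\phi_{0,j,i}}$ of the building blocks are all dominated by $\omega_f$ — otherwise the symbol $\omega_f$ in \eqref{eq:upperbound_of_BsplineKAN} must be read as a common modulus of continuity of the constituents. This is where Lemma~\ref{lem:Extension_of_Continuousf} and the normalization to $[0,1]$ carry the weight; the remaining estimates are routine triangle inequalities combined with Lemma~\ref{lem:upperbound_of_Bspline}. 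It is also worth recording that $L_G$ genuinely depends on $G$ and $k$ (refining the grid can steepen the interpolant), so the bound is quantitatively useful only once $L_G$ is itself controlled.
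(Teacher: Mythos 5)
Your argument is essentially the paper's own proof: you insert the intermediate quantity $\phi_{1,j}^{G}(z_j)$ (the paper's $(\boldsymbol{\Phi}_{1}^{G}\circ\boldsymbol{\Phi}_{0})\boldsymbol{x}$), bound the first difference by the one-dimensional B-spline estimate of Lemma~\ref{lem:upperbound_of_Bspline} applied to each of the $N$ outer functions, and push the inner-layer error $|z_j-\hat z_j|\le d\,\omega_f(\delta)$ through the outer layer via the Lipschitz constant $L_G$, arriving at the same $N(L_Gd+1)\omega_f(\delta)$ bound. Your remarks on the domain-and-modulus bookkeeping (that the outer functions live on the ranges of the inner aggregates and that $\omega_f$ must dominate the moduli of the constituent univariate functions) address details the paper's proof passes over silently, but they do not change the route.
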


\begin{proof}
To establish the inequality in equation \eqref{eq:error}, we start by decomposing it into two parts:
\begin{equation}\label{eq:error}
\begin{aligned}
\left\|(\boldsymbol{\Phi}_{1} \circ \boldsymbol{\Phi}_{0}) \boldsymbol{x}-(\boldsymbol{\Phi}_{1}^{G} \circ \boldsymbol{\Phi}_{0}^{G}) \boldsymbol{x} \right\|  & \leq \left\|(\boldsymbol{\Phi}_{1} \circ \boldsymbol{\Phi}_{0}) \boldsymbol{x}- (\boldsymbol{\Phi}_{1}^{G} \circ \boldsymbol{\Phi}_{0}) \boldsymbol{x} \right\|  \\
& + \left\| (\boldsymbol{\Phi}_{1}^{G} \circ \boldsymbol{\Phi}_{0}) \boldsymbol{x}- (\boldsymbol{\Phi}_{1}^{G} \circ \boldsymbol{\Phi}_{0}^{G}) \boldsymbol{x} \right\|.
\end{aligned}
\end{equation}
We first address the first term. Let
$\boldsymbol{\Phi}_{0}\boldsymbol{x} = \boldsymbol{y}$. By invoking the classical 1D B-spline theory from Lemma \ref{lem:upperbound_of_Bspline}, we obtain:
\begin{equation*}
\begin{aligned}
\left\|(\boldsymbol{\Phi}_{1} \circ \boldsymbol{\Phi}_{0}) \boldsymbol{x}- (\boldsymbol{\Phi}_{1}^{G} \circ \boldsymbol{\Phi}_{0}) \boldsymbol{x} \right\| & = \Bigg\| \sum_{j=1}^{N} \phi_{1, j}(\boldsymbol{y}) - \sum_{j=1}^{N} \phi_{1, j}^{G}(\boldsymbol{y}) \Bigg\|  \\
 & \leq N \cdot \omega_{f}\left(\min \left\{\frac{1}{\sqrt{2k-2}}, \sqrt{\frac{k}{12}} \cdot \frac{1}{G} \right\}\right).
\end{aligned}
\end{equation*}
Next, we consider the second term on the right-hand side of \eqref{eq:error}. Utilizing the Lipschitz constant $L_{G}$ of $\phi^{G}(\boldsymbol{x})$ and $\boldsymbol{\Phi}_{0}^{G}\boldsymbol{x} = \boldsymbol{\bar{y}}$, we have:
\begin{equation*}
\begin{aligned}
\left\| (\boldsymbol{\Phi}_{1}^{G} \circ \boldsymbol{\Phi}_{0}) \boldsymbol{x}- (\boldsymbol{\Phi}_{1}^{G} \circ \boldsymbol{\Phi}_{0}^{G}) \boldsymbol{x} \right\|  & \leq
\Bigg\| \sum_{j=1}^{N} \phi_{1, j}^{G}(\boldsymbol{y}) - \sum_{j=1}^{N} \phi_{1, j}^{G}(\boldsymbol{\bar{y}}) \Bigg\|  \\
& \leq \Bigg\| \sum_{j=1}^{N} (\phi_{1, j}^{G}(\boldsymbol{y}) -  \phi_{1, j}^{G}(\boldsymbol{\bar{y}})) \Bigg\|  \\
& \leq NL_{G} \left\| \boldsymbol{y} -  \boldsymbol{\bar{y}} \right\| = NL_{G} \left\| \boldsymbol{\Phi}_{0}\boldsymbol{x} -\boldsymbol{\Phi}_{0}^{G}\boldsymbol{x} \right\| \\
& \leq  NL_{G}d \cdot \omega_{f}\left(\min \left\{\frac{1}{\sqrt{2k-2}},\sqrt{\frac{k}{12}} \cdot \frac{1}{G}\right\}\right).
\end{aligned}
\end{equation*}
Given the definitions:
$$\Phi_{0,j}(\boldsymbol{x})=\sum_{i=1}^{d} \phi_{0,j, i}(x_i), \quad \Phi_{0,j}^{G}(\boldsymbol{x})=\sum_{i=1}^{d} \phi_{0,j, i}^{G}(x_i),$$
we can express:
$$ \boldsymbol{\Phi}_{0}\boldsymbol{x}  = [\Phi_{0,1}(\boldsymbol{x}),\Phi_{0,2}(\boldsymbol{x}), \ldots , \Phi_{0,d}(\boldsymbol{x})]^{\mathrm{T}},$$
$$ \boldsymbol{\Phi}_{0}^{G}\boldsymbol{x}  = [\Phi_{0,1}^{G}(\boldsymbol{x}),\Phi_{0,2}^{G}(\boldsymbol{x}), \ldots , \Phi_{0,d}^{G}(\boldsymbol{x})]^{\mathrm{T}},$$
$$ \boldsymbol{\Phi}_{0}\boldsymbol{x} -\boldsymbol{\Phi}_{0}^{G}\boldsymbol{x} = [\Phi_{0,1}(\boldsymbol{x}) -\Phi_{0,1}^{G}(\boldsymbol{x}),\Phi_{0,2}(\boldsymbol{x})-\Phi_{0,2}^{G}(\boldsymbol{x}), \ldots , \Phi_{0,d}(\boldsymbol{x})-\Phi_{0,d}^{G}(\boldsymbol{x})]^{\mathrm{T}}, $$
$$ \left\| \boldsymbol{\Phi}_{0}\boldsymbol{x} -\boldsymbol{\Phi}_{0}^{G}\boldsymbol{x} \right\|  \leq d \cdot \omega_{f}\left(\min \left\{\frac{1}{\sqrt{2k-2}}, \sqrt{\frac{k}{12}} \cdot \frac{1}{G} \right\}\right).$$
Returning to \eqref{eq:error}, we combine the results:
\begin{equation*}
\begin{aligned}
\left\|(\boldsymbol{\Phi}_{1} \circ \boldsymbol{\Phi}_{0}) \boldsymbol{x}-(\boldsymbol{\Phi}_{1}^{G} \circ \boldsymbol{\Phi}_{0}^{G}) \boldsymbol{x} \right\|  & \leq N \cdot \omega_{f}\left(\min \left\{\frac{1}{\sqrt{2k-2}}, \sqrt{\frac{k}{12}} \cdot \frac{1}{G} \right\}\right)  \\
& + NL_{G}d \cdot \omega_{f}\left(\min \left\{\frac{1}{\sqrt{2k-2}}, \sqrt{\frac{k}{12}} \cdot \frac{1}{G}\right\}\right) \\
& \leq  N(L_{G}d+1) \cdot \omega_{f}\left(\min \left\{\frac{1}{\sqrt{2k-2}}, \sqrt{\frac{k}{12}} \cdot \frac{1}{G} \right\}\right),
\end{aligned}
\end{equation*}
which satisfies equation \eqref{eq:upperbound_of_BsplineKAN}.
\end{proof}

\begin{corollary}\label{cor:upperbound_of_Bspline_R}
Let $f$ be a continuous functions on $[-R,R]^d$, where $R$ is an arbitrary positive real number, suppose that a function $f(\boldsymbol{x})$ admits a representation
$$ f(\boldsymbol{x})=(\boldsymbol{\Phi}_1\circ\boldsymbol{\Phi}_0)\boldsymbol{x},$$
for any $k,G \in \mathbb{N}^+$, there exists a function $\phi(\boldsymbol{x})=(\boldsymbol{\Phi}_{1}^{G}\circ \boldsymbol{\Phi}_{0}^{G})\boldsymbol{x}$, implemented by a two-layer B-spline KANs with degree $k$ of the piecewise B-spline polynomial and number of knots $G$ such that
\begin{equation}\label{eq:upperbound_of_BsplineKAN_R}
  \left\|f(\boldsymbol{x})-\phi
  (\boldsymbol{x}) \right\| \leq N(L_{G}d+1) \cdot \omega_{f}\left(\min \left\{\frac{2R}{\sqrt{2k-2}}, \sqrt{\frac{k}{12}} \cdot \frac{2R}{G}\right\}\right),
\end{equation}
where $d$ denotes the dimension of the input vector, $N$ represents the number of basis functions in the second-layer network, and $L_{G}$ is the Lipschitz constant of $\phi^{G}(\boldsymbol{x})$.
\end{corollary}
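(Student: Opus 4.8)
The plan is to deduce the corollary from Theorem~\ref{thm:upperbound_of_Bspline} by an affine change of variables carrying $[-R,R]^d$ onto the unit cube. I would introduce the coordinatewise bijection $T\colon[0,1]^d\to[-R,R]^d$ given by $T(\boldsymbol{t})=2R\boldsymbol{t}-R\boldsymbol{1}$, with inverse $T^{-1}(\boldsymbol{x})=(\boldsymbol{x}+R\boldsymbol{1})/(2R)$, and set $g = f\circ T$, a continuous function on $[0,1]^d$. Because $f=(\boldsymbol{\Phi}_1\circ\boldsymbol{\Phi}_0)\boldsymbol{x}$ and $T$ alters only each coordinate separately, $g$ inherits a two-layer representation $g=(\widetilde{\boldsymbol{\Phi}}_1\circ\widetilde{\boldsymbol{\Phi}}_0)\boldsymbol{t}$ whose outer layer $\widetilde{\boldsymbol{\Phi}}_1$ equals $\boldsymbol{\Phi}_1$ and whose inner univariate functions are those of $\boldsymbol{\Phi}_0$ precomposed with the scalar map $t\mapsto 2Rt-R$.

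Next I would apply Theorem~\ref{thm:upperbound_of_Bspline} to $g$ on $[0,1]^d$: for any $k,G\in\mathbb{N}^+$ there is a two-layer B-spline KAN $\phi_g=(\boldsymbol{\Phi}_1^{G}\circ\boldsymbol{\Phi}_0^{G})\boldsymbol{t}$ with
$$\|g(\boldsymbol{t})-\phi_g(\boldsymbol{t})\|\le N(L_{G}d+1)\,\omega_g^{[0,1]^d}\!\left(\min\left\{\frac{1}{\sqrt{2k-2}},\ \sqrt{\frac{k}{12}}\cdot\frac{1}{G}\right\}\right).$$
The decisive elementary fact is that $T$ dilates Euclidean distances by exactly the factor $2R$, i.e.\ $\|T(\boldsymbol{t})-T(\boldsymbol{s})\|_2=2R\|\boldsymbol{t}-\boldsymbol{s}\|_2$; together with the fact that $T$ maps $[0,1]^d$ onto $[-R,R]^d$, Definition~\ref{Def:Modcontinuity} then yields $\omega_g^{[0,1]^d}(r)=\omega_f^{[-R,R]^d}(2Rr)$ for every $r\ge0$. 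Substituting $r=\min\{1/\sqrt{2k-2},\,\sqrt{k/12}\cdot 1/G\}$ rewrites the right-hand side above as $N(L_{G}d+1)\,\omega_f(\min\{2R/\sqrt{2k-2},\,\sqrt{k/12}\cdot 2R/G\})$, which is exactly the bound claimed in \eqref{eq:upperbound_of_BsplineKAN_R}.

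It then remains to transport the approximant back to $[-R,R]^d$ by setting $\phi = \phi_g\circ T^{-1}$. Since $T^{-1}$ is affine and coordinatewise, precomposing each inner B-spline of $\phi_g$ with $x\mapsto(x+R)/(2R)$ again produces a B-spline function: its knot sequence is merely the affine image of the original one inside $[-R,R]$, the degree $k$ and the knot count $G$ are unchanged, and the outer B-spline layer is left intact. Hence $\phi=(\boldsymbol{\Phi}_1^{G}\circ\boldsymbol{\Phi}_0^{G})\boldsymbol{x}$ is a genuine two-layer B-spline KAN on $[-R,R]^d$, and since $T^{-1}$ is a bijection onto $[0,1]^d$ we get $\sup_{\boldsymbol{x}}\|f(\boldsymbol{x})-\phi(\boldsymbol{x})\| = \sup_{\boldsymbol{t}}\|g(\boldsymbol{t})-\phi_g(\boldsymbol{t})\|$, so $\phi$ inherits the bound just derived.

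The argument is in essence a double change of variables, so I do not expect a genuinely hard step; the point requiring care is the interpretation of the constant $L_{G}$. It is most naturally read as the Lipschitz constant of the outer B-spline layer, which the rescaling (together with the domain on which it acts) leaves unchanged, so the constant transfers verbatim; if instead $L_{G}$ is taken to be the Lipschitz constant of the full composite $\phi^{G}$, then passing to $\phi^{G}\circ T^{-1}$ divides the relevant slopes by $2R$ while the modulus argument is simultaneously multiplied by $2R$, and I would make this compensation explicit so that the factor $N(L_{G}d+1)$ in \eqref{eq:upperbound_of_BsplineKAN_R} is the honest analogue of the one in Theorem~\ref{thm:upperbound_of_Bspline}.
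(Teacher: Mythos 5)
Your proposal is correct and follows essentially the same route as the paper: an affine change of variables carrying $[-R,R]^d$ onto $[0,1]^d$, an application of \autoref{thm:upperbound_of_Bspline} on the unit cube, and the identity $\omega_{f\circ T}(r)=\omega_f(2Rr)$ to convert the modulus argument. The only cosmetic difference is that the paper first invokes \autoref{lem:Extension_of_Continuousf} to extend $f$ to all of $\mathbb{R}^d$ before rescaling, whereas you work directly with the bijection between the two cubes; your closing remarks on how the B-spline knots and the constant $L_G$ transform under the rescaling are more explicit than anything in the paper's proof.
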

\begin{proof}
Let $f\in C(E)$ be an arbitrary component of $\boldsymbol{f}$ and $f$ is a continuous function. By \autoref{lem:Extension_of_Continuousf} setting $S=\mathbb{R}^{d}$, there exists $h\in C(\mathbb{R}^{d})$ such that $h(\boldsymbol{x})=f(\boldsymbol{x})$ for any $\boldsymbol{x} \in E \subseteq [-R,R]^d$ and $\omega_{h}^{S}( r)=\omega_{f}^{E}(r)$ for any $r \geq 0$.
Define
\begin{equation*}
  \hat{h}(\boldsymbol{x}):= h(2R\boldsymbol{x} -R) \quad \text{for any}~ \boldsymbol{x} \in \mathbb{R}^d.
\end{equation*}
According to \autoref{thm:upperbound_of_Bspline} there exist $\hat{\phi}(\boldsymbol{x})$ such that
\begin{equation*}
  \left\|\hat{h}(\boldsymbol{x})-\hat{\phi}(\boldsymbol{x}) \right\| \leq N(Ld+1) \cdot \omega_{\hat{h}}^{S} \left(\min \left\{\frac{1}{\sqrt{2k-2}}, \sqrt{\frac{k}{12}} \cdot \frac{1}{G}\right\}\right), \quad \text{for any}~ \boldsymbol{x} \in [0,1]^d.
\end{equation*}
By implementing a variable transformation to revert our notation from $\hat{h}$ to $h$, we establish the relationship $f(\boldsymbol{x}) = h(\boldsymbol{x}) = \hat{h}\left(\frac{\boldsymbol{x}+R}{2R}\right)$ for any $\boldsymbol{x} \in E \subseteq [-R,R]^d$. Accordingly, the modulus of $\hat{h}$, denoted as $\omega_{\hat{h}}^{S}(r)$, is equivalent to $\omega_{h}^{S}(2Rr)$, which in turn equals $\omega_{f}^{E}(2Rr)$.

For the function $\phi(\boldsymbol{x})$, we define it in terms of $\hat{\phi}$ as $\phi(\boldsymbol{x}) := \hat{\phi}\left(\frac{\boldsymbol{x}+R}{2R}\right)$ for $\frac{\boldsymbol{x}+R}{2R} \in [0,1]^d$. This leads us to the following inequality:
\begin{equation*}
\begin{aligned}
\|f(\boldsymbol{x})-\phi(\boldsymbol{x})\| & = \|h(\boldsymbol{x})-\phi(\boldsymbol{x})\| = \left\| \hat{h}\left(\frac{\boldsymbol{x}+R}{2R}\right) - \hat{\phi}
\left(\frac{\boldsymbol{x}+R}{2R}\right)\right\|   \\
& \leq N(L_{G}d+1) \cdot \omega_{\hat{h}}^{S} \left(\min \left\{\frac{2R}{\sqrt{2k-2}}, \sqrt{\frac{k}{12}} \cdot \frac{2R}{G}\right\}\right)   \\
& \leq N(L_{G}d+1) \cdot \omega_{f}\left(\min \left\{\frac{2R}{\sqrt{2k-2}}, \sqrt{\frac{k}{12}} \cdot \frac{2R}{G}\right\}\right),
\end{aligned}
\end{equation*}
which implies \eqref{eq:upperbound_of_BsplineKAN_R}.
\end{proof}

\begin{theorem}\label{thm:upperbound_of_Bspline for holder}
Let $f$ be a function that is H\"older continuous with exponent
$\alpha$ and constant $\lambda$  on the domain $[0,1]^d$.
Assume that $f(\boldsymbol{x})$ can be expressed as a composition of two functions:
$$ f(\boldsymbol{x})=(\boldsymbol{\Phi}_1\circ\boldsymbol{\Phi}_0)\boldsymbol{x}.$$
For any positive integers $k$ and $G$, there exists an approximating  function $\phi(\boldsymbol{x})=(\boldsymbol{\Phi}_{1}^{G}\circ \boldsymbol{\Phi}_{0}^{G})\boldsymbol{x},$ realizable by a two-layer B-spline KAN with degree $k$ and
$G$ knots, such that the approximation error is bounded by:
\begin{equation}\label{eq:upperbound_of_BsplineKANHolder}
  \|f(\boldsymbol{x})-\phi(\boldsymbol{x}) \| \leq \lambda N(L_{G}d+1) \left( \min \left\{\frac{1}{\sqrt{2}} (k-1)^{-\frac{1}{2}},
  \sqrt{\frac{1}{12}}G^{-1} k^{\frac{1}{2}}\right\} \right)^{\alpha},
\end{equation}
where $d$ is the dimensionality of the input vector, $N$ is the count of basis functions in the second layer of the network, and $L_{G}$ is the Lipschitz constant of $\phi^{G}(\boldsymbol{x})$.
\end{theorem}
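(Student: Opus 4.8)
The plan is to obtain \eqref{eq:upperbound_of_BsplineKANHolder} as a direct specialization of \autoref{thm:upperbound_of_Bspline}, exploiting the fact that H\"older continuity supplies an explicit majorant for the modulus of continuity. First I would recall that if $f \in H_{\lambda}(C^{\alpha}([0,1]^d))$, then by \autoref{Def:Holder_continuous} and \autoref{Def:Modcontinuity} we have $\omega_f^{[0,1]^d}(r) = \sup\{|f(\boldsymbol x) - f(\boldsymbol y)| : \|\boldsymbol x - \boldsymbol y\|_2 \le r\} \le \lambda r^{\alpha}$ for every $r \ge 0$. Since restriction to a subinterval can only decrease the supremum, the same inequality, with the \emph{same} constant $\lambda$ and exponent $\alpha$, also controls the one-dimensional moduli of the inner and outer pieces that enter the proof of \autoref{thm:upperbound_of_Bspline}.

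Next I would invoke \autoref{thm:upperbound_of_Bspline} verbatim: for any $k, G \in \mathbb{N}^{+}$ there exists a two-layer B-spline KAN $\phi = \boldsymbol{\Phi}_{1}^{G} \circ \boldsymbol{\Phi}_{0}^{G}$ of degree $k$ with $G$ knots such that
\[
\|f(\boldsymbol x) - \phi(\boldsymbol x)\| \le N(L_{G} d + 1)\,\omega_f\!\left(\min\left\{\tfrac{1}{\sqrt{2k-2}},\ \sqrt{\tfrac{k}{12}}\cdot\tfrac{1}{G}\right\}\right).
\]
Substituting the H\"older estimate $\omega_f(r) \le \lambda r^{\alpha}$ with $r$ equal to the quantity inside the minimum, and using that $t \mapsto t^{\alpha}$ is nondecreasing on $[0,\infty)$ for $\alpha \in (0,1]$, so that the power may be pulled inside, $(\min\{a,b\})^{\alpha} = \min\{a^{\alpha},b^{\alpha}\}$, gives
\[
\|f(\boldsymbol x) - \phi(\boldsymbol x)\| \le \lambda N(L_{G} d + 1)\left(\min\left\{\tfrac{1}{\sqrt{2k-2}},\ \sqrt{\tfrac{k}{12}}\cdot\tfrac{1}{G}\right\}\right)^{\alpha}.
\]
Finally I would rewrite the two arguments of the minimum in the advertised closed form, namely $\frac{1}{\sqrt{2k-2}} = \frac{1}{\sqrt{2}}(k-1)^{-1/2}$ and $\sqrt{\frac{k}{12}}\cdot\frac{1}{G} = \sqrt{\frac{1}{12}}\,G^{-1}k^{1/2}$, which yields exactly \eqref{eq:upperbound_of_BsplineKANHolder}.

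Because every step is an algebraic substitution built on an already-proved theorem, there is no genuine analytic obstacle. The only points deserving a word of care are: (i) confirming that the H\"older inequality with the same $\lambda$ and $\alpha$ is inherited by the restrictions of $f$ to the relevant knot intervals, which is immediate from \autoref{Def:Holder_continuous}; (ii) justifying the interchange of the minimum with the $\alpha$-th power, which follows from monotonicity of $x \mapsto x^{\alpha}$ on the nonnegative reals; and (iii) recalling, as in \autoref{thm:upperbound_of_Bspline}, that $L_{G}$ is the Lipschitz constant of the realized network $\phi^{G}$, which is finite since any finite linear combination of B-spline basis functions on a compact interval is Lipschitz, so the stated bound is non-vacuous.
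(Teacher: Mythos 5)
Your proposal is correct and follows essentially the same route as the paper's own proof: it applies \autoref{thm:upperbound_of_Bspline}, bounds the modulus of continuity by $\omega_f(r)\le\lambda r^{\alpha}$ using the H\"older hypothesis, and rewrites the arguments of the minimum in the stated closed form. The extra remarks on monotonicity of $t\mapsto t^{\alpha}$ and on the finiteness of $L_G$ are sensible but not needed beyond what the paper already does.
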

\begin{proof}
  H\"older continuous functions of order $\alpha$ with a H\"older continuous constant $\lambda$ on $[0,1]^d$ can be denoted as the class $ H_{\lambda}\left(C^{\alpha}\left([0,1]^{d}\right)\right. $, so we have $ \omega_{f}(r)\leq \lambda r^{\alpha}$.
  According to \autoref{thm:upperbound_of_Bspline}
\begin{equation*}
\begin{aligned}
\|f(\boldsymbol{x})-\phi(\boldsymbol{x}) \| &\leq N(L_{G}d+1) \cdot \omega_{f}\left(\min \left\{\frac{1}{\sqrt{2k-2}}, \sqrt{\frac{k}{12}} \cdot \frac{1}{G}\right\}\right) \\
& \leq \lambda N(L_{G}d+1) \cdot \left(\min \left\{\frac{1}{\sqrt{2k-2}}, \sqrt{\frac{k}{12}} \cdot \frac{1}{G} \right\}\right)^{\alpha}   \\
& \leq \lambda N(L_{G}d+1) \left( \min \left\{\frac{1}{\sqrt{2}} (k-1)^{-\frac{1}{2}}, \sqrt{\frac{1}{12}}G^{-1} k^{\frac{1}{2}}\right\} \right)^{\alpha},
\end{aligned}
\end{equation*}
thus, equation \eqref{eq:upperbound_of_BsplineKANHolder} holds.
\end{proof}

\begin{corollary}\label{cor:a}
 Let $f$ be a H\"older continuous functions of order $\alpha$ with a H\"older continuous constant $\lambda$ on $[-R,R]^d$, Suppose that a function $f(\boldsymbol{x})$ admits a representation
$$ f(\boldsymbol{x})=(\boldsymbol{\Phi}_1\circ\boldsymbol{\Phi}_0)\boldsymbol{x}$$
for any $k,G \in \mathbb{N}^+$, there exists a function $\phi(\boldsymbol{x})=(\boldsymbol{\Phi}_{1}^{G}\circ \boldsymbol{\Phi}_{0}^{G})\boldsymbol{x},$ implemented by a two-layer B-spline KANs with degree $k$ of the piecewise B-spline polynomial and number of knots $G$ such that
\begin{equation}\label{eq:upperbound_of_BsplineKANHolder_R}
  \|f(\boldsymbol{x})-\phi(\boldsymbol{x}) \| \leq \lambda N(L_{G}d+1)R^{\alpha} \left( \min \left\{\sqrt{2} (k-1)^{-\frac{1}{2}},
  \sqrt{\frac{1}{3}}G^{-1} k^{\frac{1}{2}}\right\} \right)^{\alpha},
\end{equation}
where $d$ denotes the dimension of the input vector, $N$ represents the number of basis functions in the second-layer network, and $L_{G}$ is the Lipschitz constant of $\phi^{G}(\boldsymbol{x})$.
\end{corollary}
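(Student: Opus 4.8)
The plan is to reduce the claim to two facts already established: \autoref{cor:upperbound_of_Bspline_R}, which handles a general continuous $f$ on $[-R,R]^d$, together with the Hölder modulus estimate already exploited in \autoref{thm:upperbound_of_Bspline for holder}. First I would record the translation $f \in H_\lambda(C^\alpha([-R,R]^d)) \iff \omega_f^{[-R,R]^d}(r) \le \lambda r^\alpha$ for all $r \ge 0$, and note that the extension step of \autoref{lem:Extension_of_Continuousf} (invoked inside the proof of \autoref{cor:upperbound_of_Bspline_R}) preserves the modulus of continuity, so the extended function inherits the very same Hölder bound, with the same $\lambda$ and $\alpha$.

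Next I would apply \autoref{cor:upperbound_of_Bspline_R} verbatim to produce a two-layer B-spline KAN $\phi(\boldsymbol{x}) = (\boldsymbol{\Phi}_1^G \circ \boldsymbol{\Phi}_0^G)\boldsymbol{x}$ with
$$\|f(\boldsymbol{x}) - \phi(\boldsymbol{x})\| \le N(L_G d + 1)\,\omega_f\!\left(\min\left\{\frac{2R}{\sqrt{2k-2}},\ \sqrt{\frac{k}{12}}\cdot\frac{2R}{G}\right\}\right),$$
and then substitute $\omega_f(r) \le \lambda r^\alpha$ with $r$ equal to the indicated minimum. The rest is algebra: rewrite $\tfrac{2R}{\sqrt{2k-2}} = \sqrt{2}\,R\,(k-1)^{-1/2}$ and $\sqrt{\tfrac{k}{12}}\cdot\tfrac{2R}{G} = R\sqrt{\tfrac{1}{3}}\,G^{-1}k^{1/2}$, factor the common $R$ out of the minimum, and use monotonicity of $t \mapsto t^\alpha$ on $[0,\infty)$ to push the exponent $\alpha$ through the $\min$ and to split off the factor $R^\alpha$. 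This lands exactly on \eqref{eq:upperbound_of_BsplineKANHolder_R}.

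As an alternative route (which I would note in a sentence), one can instead mimic the proof of \autoref{cor:upperbound_of_Bspline_R} directly: set $\hat h(\boldsymbol{x}) := h(2R\boldsymbol{x} - R)$, observe $\hat h \in H_{\lambda(2R)^\alpha}(C^\alpha([0,1]^d))$ since $\omega_{\hat h}(r) = \omega_h(2Rr) \le \lambda(2R)^\alpha r^\alpha$, apply \autoref{thm:upperbound_of_Bspline for holder} on $[0,1]^d$, and undo the affine change of variables via $\phi(\boldsymbol{x}) := \hat\phi\!\left(\tfrac{\boldsymbol{x}+R}{2R}\right)$. Distributing $(2R)^\alpha$ over the two entries of the minimum again yields the $R^\alpha$ prefactor and the constants $\sqrt{2}$ and $\sqrt{1/3}$.

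I do not expect a genuine obstacle: the statement is a corollary in the literal sense. The only point requiring care is bookkeeping — keeping the dilation factor $2R$ consistently attached to the Hölder constant rather than buried inside the modulus, and simplifying $\sqrt{2k-2}$ and $\sqrt{k/12}$ so that the resulting constants match the stated form $\sqrt{2}(k-1)^{-1/2}$ and $\sqrt{1/3}\,G^{-1}k^{1/2}$ inside the $\min$.
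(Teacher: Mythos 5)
Your proposal is correct, and it is exactly the argument the paper intends: the corollary is stated without proof, and the evident derivation is to apply \autoref{cor:upperbound_of_Bspline_R} and then insert the H\"older bound $\omega_f(r)\le\lambda r^\alpha$, precisely mirroring how \autoref{thm:upperbound_of_Bspline for holder} is obtained from \autoref{thm:upperbound_of_Bspline}. Your constant bookkeeping is right as well: $\tfrac{2R}{\sqrt{2k-2}}=\sqrt{2}\,R\,(k-1)^{-1/2}$ and $\sqrt{\tfrac{k}{12}}\cdot\tfrac{2R}{G}=\sqrt{\tfrac{1}{3}}\,R\,G^{-1}k^{1/2}$, so factoring $R$ out of the minimum and raising to the power $\alpha$ yields \eqref{eq:upperbound_of_BsplineKANHolder_R} exactly.
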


\subsection{Lower bound approximation error and VC-dimension}
The approximation error and the Vapnik-Chervonenkis (VC) dimension are pivotal metrics for evaluating the capacity, or complexity, of a function class. This subsection delves into the interplay between these two fundamental concepts \cite{JordanMitchell2015}.

We commence by elucidating the definitions of VC-dimension and associated terminologies.

\begin{definition}[Shattering \cite{BartlettHarveyLiawMehrabian2017}]\label{Def:shattering}
Let $S$ represent a class of functions mapping from a domain
$\mathbb{E}$ to $\{0,1\}$. The class
$S$ is said to shatter a subset of points  $ \left\{ \boldsymbol{x_{1}} ,\boldsymbol{x_{2}}, \cdots ,\boldsymbol{x_{m}} \right\}\subseteq \mathbb{E} $, if
\begin{equation*}
\Big| \Big\{\big[s(\boldsymbol{x_{1}}),s(\boldsymbol{x_{2}}),\cdots,s(\boldsymbol{x_{m}})\big]^T\in \{0,1\}^m: s\in S\Big\}\Big|=2^m,
\end{equation*}
where $|\cdot|$ denotes the size of the set.
\end{definition}
The above definition means, given any $\xi_i\in \{0,1\}$ for $i=1,2,\cdots,m$, there exists $s\in S$ such that $s(\boldsymbol{x_{i}})=\xi_i$ for all $i$.
For a general function set  $\mathcal{A}$   with its elements mapping from $\mathbb{E}$ to $\Re$, we say  $\mathcal{A}$  shatters $ \left\{ \boldsymbol{x_{1}} ,\boldsymbol{x_{2}}, \cdots ,\boldsymbol{x_{m}} \right\}\subseteq \mathbb{E} $,  if
$\mathcal{D} \circ \mathcal{A}$ does, where
\begin{equation*}
 \mathcal{D}(t) := \left\{\begin{matrix} 1, ~ t \geq 0 , \\ 0,~ t<0 \end{matrix} \right. ~~ \text{and} ~~ \mathcal{D}\circ \mathcal{A}:=\{\mathcal{D}\circ f: f\in \mathcal{A}\} .
\end{equation*}

\begin{definition}[Growth function \cite{BartlettHarveyLiawMehrabian2017}]\label{Def:growth function}
Assume $\mathcal{A}$ is a class of functions mapping from a general domain $\mathbb{E}$ to $\{0,1\}$,
the growth function of $\mathcal{A}$ is defined as
\begin{equation*}
 \Pi_{\mathcal{A}}(m):= \max_ { \boldsymbol{x_{1}} ,\boldsymbol{x_{2}}, \cdots ,\boldsymbol{x_{m}}} \left| \left\{[h(\boldsymbol{x_{1}}),h(\boldsymbol{x_{2}}),\cdots,h(\boldsymbol{x_{m}})\big]^T \in \left\{0,1 \right\}^{m} :h\in \mathcal{A} \right\}\right| .
\end{equation*}
\end{definition}

\begin{definition}[VC-dimension \cite{BartlettHarveyLiawMehrabian2017}]\label{Def:VC-dimension}
Assume $\mathcal{A}$ is a class of functions from $\mathbb{E}$ to $\{0,1\}$.
The VC-dimension of $\mathcal{A}$, denoted by  $\operatorname{VCDim}(\mathcal{A})$, is the size of the largest shattered set, namely,
\begin{equation*}
    \operatorname{VCDim}(\mathcal{A}):= \sup \left\{m\in\mathbb{N}^+:\Pi_{\mathcal{A}}(m )=2^{m}\right\}.
\end{equation*}
If there is no largest $m$, $\operatorname{VCDim}(\mathcal{A})=\infty$.
\end{definition}
Let $\mathcal{A}$ be a class of functions from $\mathbb{E}$ to $\Re$. The VC-dimension of $\mathcal{A}$, denoted by $\operatorname{VCDim}(\mathcal{A})$, is defined by $\operatorname{VCDim}(\mathcal{A}):=\operatorname{VCDim}(\mathcal{D} \circ \mathcal{A})$
\cite{BartlettHarveyLiawMehrabian2017}.
In particular, the expression “VC-dimension of a network (architecture)” means the  VC-dimension of the function set that consists of all functions implemented by this  network architecture.

\begin{lemma}[Theorem 2.4 of \cite{ShenZhangYang2022}]\label{lem:VC&approx_holder}
	Assume $\mathcal{A}$ is a function set  with all elements defined on $[0,1]^d$. Given any $\varepsilon>0$, suppose $\operatorname{VCDim}(\mathcal{A}) \geq 1$ and
\begin{equation*}
\inf_{\phi\in \mathcal{A}}\|\phi-f\|_{L^\infty([0,1]^d)}\leq \varepsilon,\quad \text{for any} ~ f\in H_{1}(C^{\alpha}([0,1]^{d})).
\end{equation*}
Then $\operatorname{VCDim}(\mathcal{A}) \geq (9\varepsilon)^{-d/\alpha}$.
\end{lemma}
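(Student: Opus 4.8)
The plan is a direct shattering argument: a class $\mathcal{A}$ that is rich enough to $\varepsilon$-approximate \emph{every} unit-ball Hölder function in $L^\infty$ must, after thresholding by $\mathcal{D}$, shatter a grid of roughly $\varepsilon^{-d/\alpha}$ points, because the unit-ball Hölder functions themselves can already realize an arbitrary $\pm$ sign pattern on a suitably spaced grid. Fix $\varepsilon>0$. If $\varepsilon\ge 1/9$ then $(9\varepsilon)^{-d/\alpha}\le 1\le\operatorname{VCDim}(\mathcal{A})$ by hypothesis, so there is nothing to prove; hence assume $\varepsilon<1/9$.

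First I would set up the grid. Choose $c>\varepsilon$ slightly larger than $\varepsilon$, put $\delta$ equal to a fixed constant multiple of $c^{1/\alpha}$, let $K:=\lfloor 1/\delta\rfloor$, and let $\boldsymbol{x}_{\boldsymbol{i}}$, $\boldsymbol{i}\in\{1,\dots,K\}^d$, be the centers of the $K^d$ congruent subcubes of side $1/K$ of $[0,1]^d$; set $\Xi:=\{\boldsymbol{x}_{\boldsymbol{i}}\}$, so $|\Xi|=K^d$. For each sign vector $\boldsymbol{\xi}\in\{-1,1\}^{\Xi}$ I would build $f_{\boldsymbol{\xi}}(\boldsymbol{x})=c\sum_{\boldsymbol{i}}\xi_{\boldsymbol{i}}\,g\!\bigl(2K(\boldsymbol{x}-\boldsymbol{x}_{\boldsymbol{i}})\bigr)$, where $g(\boldsymbol{z})=\max\{0,\,1-\|\boldsymbol{z}\|_2\}$ is the standard radial tent supported in the closed Euclidean unit ball (an optimized profile may be used instead). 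The dilated bumps have pairwise disjoint open supports, all contained in $[0,1]^d$, and $f_{\boldsymbol{\xi}}(\boldsymbol{x}_{\boldsymbol{i}})=c\,\xi_{\boldsymbol{i}}$. The crucial point is that $f_{\boldsymbol{\xi}}\in H_1\bigl(C^\alpha([0,1]^d)\bigr)$: each summand is $\tfrac{2cK}{?}$-Lipschitz and vanishes on the boundary of its support, so on that support its $C^\alpha$-seminorm is of order $c\,\delta^{-\alpha}$; and since $t\mapsto t^\alpha$ is concave and subadditive, cutting any segment $[\boldsymbol{x},\boldsymbol{y}]$ at the points where it leaves/enters the relevant bumps shows the $C^\alpha$-seminorm of the whole sum is a bounded multiple of that of a single rescaled bump. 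Tracking all constants — the Lipschitz constant of $g$, the dilation $2K$, the factor $2$ for sign separation, and the $2^{1-\alpha}$ gluing factor — and taking $\delta$ as small as the constraint ``seminorm $\le 1$'' permits, one may push $\delta$ down to essentially $(9\varepsilon)^{1/\alpha}$; hence $K\ge 1/\delta-1\ge (9\varepsilon)^{-1/\alpha}$ once $\varepsilon<1/9$, so $|\Xi|=K^d\ge (9\varepsilon)^{-d/\alpha}$.

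Next I would invoke the approximation hypothesis: for every $\boldsymbol{\xi}$ there is $\phi_{\boldsymbol{\xi}}\in\mathcal{A}$ with $\|\phi_{\boldsymbol{\xi}}-f_{\boldsymbol{\xi}}\|_{L^\infty([0,1]^d)}\le\varepsilon$. Since $|f_{\boldsymbol{\xi}}(\boldsymbol{x}_{\boldsymbol{i}})|=c>\varepsilon$ for every $\boldsymbol{i}$, the value $\phi_{\boldsymbol{\xi}}(\boldsymbol{x}_{\boldsymbol{i}})$ is nonzero with the same sign as $\xi_{\boldsymbol{i}}$, so $\mathcal{D}\bigl(\phi_{\boldsymbol{\xi}}(\boldsymbol{x}_{\boldsymbol{i}})\bigr)$ equals the bit prescribed by $\boldsymbol{\xi}$. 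Letting $\boldsymbol{\xi}$ range over all $2^{|\Xi|}$ sign patterns, this shows $\mathcal{D}\circ\mathcal{A}$ shatters $\Xi$, hence $\Pi_{\mathcal{D}\circ\mathcal{A}}(|\Xi|)=2^{|\Xi|}$ and $\operatorname{VCDim}(\mathcal{A})=\operatorname{VCDim}(\mathcal{D}\circ\mathcal{A})\ge|\Xi|\ge(9\varepsilon)^{-d/\alpha}$, which is the claim.

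The main obstacle is obtaining the sharp constant $9$ rather than merely some $c'\,\varepsilon^{-d/\alpha}$. This needs (i) a \emph{dimension-free} Hölder-seminorm estimate for the glued bump, which is why radial, Euclidean-ball-supported bumps are used and why the concavity of $t\mapsto t^\alpha$ is applied along segments rather than crudely bounding $|f_{\boldsymbol{\xi}}|$ by $2c$; and (ii) careful bookkeeping of the floor in $K=\lfloor 1/\delta\rfloor$ together with the strict inequality $c>\varepsilon$ (needed so the $\varepsilon$-approximation cannot flip or annihilate a sign), the slack between the naive constant and $9$ being precisely what absorbs the ``$-1$'' lost to flooring once $\varepsilon<1/9$. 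Everything else — the disjointness of the supports, the shattering step, and the identity $\operatorname{VCDim}(\mathcal{A})=\operatorname{VCDim}(\mathcal{D}\circ\mathcal{A})$ — is routine.
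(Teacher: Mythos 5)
The paper does not prove this lemma: it is imported verbatim as Theorem~2.4 of the cited reference \cite{ShenZhangYang2022}, so there is no in-paper argument to compare yours against. That said, your shattering construction is the standard (and, as far as the cited source goes, essentially the intended) route: build sign-alternating Hölder bumps of height just above $\varepsilon$ on a grid of $K^d$ disjoint cells with $K\asymp \varepsilon^{-1/\alpha}$, observe that an $L^\infty$ $\varepsilon$-approximant must reproduce the sign at each grid center, and conclude that $\mathcal{D}\circ\mathcal{A}$ shatters the grid. Your handling of the trivial regime $\varepsilon\ge 1/9$, the strict inequality $c>\varepsilon$ needed so the threshold $\mathcal{D}$ cannot be fooled at a grid point, and the inscribed-ball disjointness of the supports are all correct. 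The only part left genuinely unfinished is the constant bookkeeping: you assert that $\delta$ ``may be pushed down to essentially $(9\varepsilon)^{1/\alpha}$'' without exhibiting the seminorm bound. In fact the crude estimate already suffices: a bump $c\,g(2K(\cdot-\boldsymbol{x}_{\boldsymbol{i}}))$ with $g$ $1$-Lipschitz and bounded by $1$ has $C^\alpha$-seminorm at most $2c(2K)^\alpha$ (interpolating the Lipschitz bound at scale $(2K)^{-1}$ against the sup-norm bound at larger scales), so the constraint $2c(2K)^\alpha\le 1$ permits $K=\lfloor \tfrac12(2c)^{-1/\alpha}\rfloor$, and since $\tfrac12(9/2)^{1/\alpha}\ge 9/4>1$ for $\alpha\in(0,1]$ this exceeds $(9\varepsilon)^{-1/\alpha}$ with enough slack to absorb both the flooring and the gluing factor. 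Writing out that one display would close the gap; everything else in your sketch is sound.
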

As shown in \cite{ShenYangZhang2020,LuShenYangZhang2021,ShenYangZhang2021a}, VC-dimension essentially determines the lower bound of the approximation errors of networks. Next, we introduce the relationship between two approximation errors and the VC-dimension.

\autoref{lem:VC&approx_holder} investigates the connection between VC-dimension of $\mathcal{A}$ and the  approximation errors of functions in $H_{1}(C^{\alpha}([0,1]^{d}))$ approximated by elements of $\mathcal{A}$. Denote the best approximation error of functions in $H_{1}(C^{\alpha}([0,1]^{d}))$ approximated by the elements of $\mathcal{A}$ as
\begin{equation*}
\varepsilon_{\alpha,d}(\mathcal{A}):=\sup_{f \in H_{1}(C^{\alpha}([0,1]^{d}))}\left(\inf_{\phi \in \mathcal{A}} \|\phi-f\|_{L^{\infty}([0,1]^{d})} \right).
\end{equation*}
Subsequently, by using \autoref{lem:VC&approx_holder}, we establish the inequality
$$ \operatorname{VCDim}(\mathcal{A})^{-\alpha/d}/9 \leq \varepsilon_{\alpha,d},$$
which implies that a substantial VC-dimension is essential for achieving a favorable approximation rate. This is because the optimal approximation rate is governed by a factor that is explicitly dependent on the VC-dimension. Moving forward, we shall examine the VC-dimension of the network we have constructed.

\begin{lemma}[Theorem 8 of \cite{BartlettHarveyLiawMehrabian2017}]\label{lem:VC_poly}
Consider a neural network with $M$ parameters and $U$ units with activation functions that are piecewise polynomials with at most $P$ pieces and of degree at most $d$. Let $\mathcal{A}$ be the set of (real-valued) functions computed by this network. Then $\operatorname{VCDim}((\mathcal{D} \circ \mathcal{A}))= O(MU\log((d + 1)P))$.
\end{lemma}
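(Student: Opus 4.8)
The plan is to establish the bound by a \emph{parameter-space partitioning} argument combined with a classical estimate on the number of sign patterns realizable by a system of real polynomials. Write $\theta \in \mathbb{R}^{M}$ for the vector of the $M$ trainable parameters of the network and let $f_{\theta}$ denote the real-valued function it computes. Fix arbitrary points $\boldsymbol{x}_{1},\dots,\boldsymbol{x}_{m} \in \mathbb{E}$. By the definitions of the growth function and of the VC-dimension, it suffices to upper bound the number of distinct vectors $(\mathcal{D}(f_{\theta}(\boldsymbol{x}_{1})),\dots,\mathcal{D}(f_{\theta}(\boldsymbol{x}_{m})))$ attained as $\theta$ ranges over $\mathbb{R}^{M}$, and then to show that this number falls below $2^{m}$ as soon as $m$ exceeds a fixed multiple of $MU\log((d+1)P)$; this forces $\operatorname{VCDim}(\mathcal{D}\circ\mathcal{A})$ to be no larger than the claimed quantity.

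The main analytic tool I would use is Warren's theorem (equivalently the Milnor--Thom bound): if $p_{1},\dots,p_{K}$ are real polynomials in $M$ variables, each of degree at most $\Delta$, then the number of distinct sign vectors $(\operatorname{sgn} p_{1}(\theta),\dots,\operatorname{sgn} p_{K}(\theta))$ is at most $2(2eK\Delta/M)^{M}$ provided $K \ge M \ge 1$ (and is at most $2^{K}$ in the remaining trivial case). This estimate will be invoked twice: once to count the cells of a partition of parameter space, and once to count the achievable sign patterns inside a single cell.

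The core of the argument is an induction over the layers, processing the network's units in topological order. I would maintain the invariant that, after the first $\ell$ layers have been handled, $\mathbb{R}^{M}$ is partitioned into finitely many cells such that, inside each cell and for every fixed input $\boldsymbol{x}_{i}$, the pre-activation and post-activation of every unit in layers $\le \ell$ is a polynomial in $\theta$ of degree at most $\Delta_{\ell}$. To advance from $\ell$ to $\ell+1$: each unit in layer $\ell+1$ has a pre-activation that is affine in its incoming weights and in the (polynomial) post-activations of earlier units, hence is itself a polynomial in $\theta$; its piecewise-polynomial activation has at most $P-1$ breakpoints, and each breakpoint, for each input $\boldsymbol{x}_{i}$, contributes one polynomial equation in $\theta$. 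Refining the current partition by these at most $m\,U_{\ell+1}(P-1)$ polynomial hypersurfaces (applying Warren's theorem inside each existing cell) and then composing with the degree-$\le d$ activation pieces yields a finer partition on which the layer-$(\ell+1)$ outputs are again polynomial, now of degree at most $d(\Delta_{\ell}+1)$. Unrolling the recursion and carefully aggregating the layer-wise estimates — counting cells globally, counting sign patterns cell by cell, and choosing the granularity of each subdivision so that the degree growth does not compound too wastefully — bounds $\log_{2}\Pi_{\mathcal{D}\circ\mathcal{A}}(m)$ by an expression of the form $c\,MU\log((d+1)P) + c\,MU\log m$ for an absolute constant $c$, where $U=\sum_{\ell}U_{\ell}$; a final application of Warren's theorem to the $m$ output polynomials inside each cell contributes only a term of the same form. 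Imposing $2^{m} \le \Pi_{\mathcal{D}\circ\mathcal{A}}(m)$ and solving the resulting transcendental inequality for $m$ — the $\log m$ term is of lower order and is absorbed by halving the leading constant — gives $m = O(MU\log((d+1)P))$, which is the asserted VC-dimension bound.

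The step I expect to be the main obstacle is the bookkeeping inside the layer-by-layer induction: obtaining a single clean $\log((d+1)P)$ factor — rather than separate and larger contributions such as an $L\log P$ term together with an $L^{2}\log(d+1)$ term arising from naive unrolling of the degree recursion — requires applying the polynomial sign-pattern bound at the optimal granularity at each stage, tracking precisely how the degrees $\Delta_{\ell}$ and the numbers of subdividing hypersurfaces compound with depth, and handling the measure-zero set of parameters that lie exactly on a breakpoint, where the activation is merely continuous rather than polynomial, by folding those strata into the partition without inflating the asymptotic cell count.
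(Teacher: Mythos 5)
This lemma is not proved in the paper at all: it is imported verbatim as Theorem 8 of \cite{BartlettHarveyLiawMehrabian2017}, so there is no in-paper argument to compare against. Your outline correctly reconstructs the strategy of the cited proof: fix $m$ sample points, partition the parameter space $\mathbb{R}^{M}$ into cells on which every unit's output is a fixed polynomial in the parameters, bound the number of cells and the number of sign patterns per cell via Warren's sign-pattern estimate, and then solve $2^{m}\le \Pi_{\mathcal{D}\circ\mathcal{A}}(m)$ for $m$. As an identification of the method, this is accurate, and your handling of the breakpoint strata and of the degree recursion $\Delta_{\ell+1}\le d(\Delta_{\ell}+1)$ is correct.

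The difficulty is that what you have written is a plan rather than a proof, and the step you explicitly defer is the one that carries the content of the theorem. Executing your recursion naively, the degree $\Delta_{\ell}$ grows like $(d+1)^{\ell}$, so the degree-dependent part of $\log\Pi_{\mathcal{D}\circ\mathcal{A}}(m)$ is of order $\sum_{\ell} M\,\ell\log(d+1)=O(MU^{2}\log(d+1))$, not $O(MU\log(d+1))$ --- i.e., exactly the ``separate and larger contribution'' you warn about. That weaker estimate is essentially the classical Goldberg--Jerrum/Anthony--Bartlett bound; the whole point of Theorem 8 of \cite{BartlettHarveyLiawMehrabian2017} is the device that collapses the depth-compounded degrees into a single $\log((d+1)P)$ factor, and you neither carry out the aggregation nor indicate what that device is. So the proposal, as it stands, establishes only an $O\big(MU\log P+MU^{2}\log(d+1)\big)$-type bound, not the stated one. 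For this paper the distinction is largely harmless --- the lemma enters only through \autoref{thm:upperbound_of_Bspline_KAN}, where even the weaker VC-dimension bound would produce a lower bound of the same qualitative form --- but as a proof of the lemma as stated the attempt is incomplete at its acknowledged crux.
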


\begin{theorem}[Lower bound of two-layer B-spline KANs]\label{thm:upperbound_of_Bspline_KAN} Let $\boldsymbol{f}$ be a continuous functions on $[0,1]^d$, suppose that a function $\boldsymbol{f}$ admits a representation
$$ \boldsymbol{f}=(\boldsymbol{\Phi}_1\circ\boldsymbol{\Phi}_0)\boldsymbol{x}$$
for any $k,G \in \mathbb{N}^+$,there exists a function $\boldsymbol{\phi}(\boldsymbol{x})=(\boldsymbol{\Phi}_{1}^{G}\circ \boldsymbol{\Phi}_{0}^{G})\boldsymbol{x}$, implemented by a two-layer B-spline KANs with degree $k$ of the piecewise B-spline polynomial and number of knots $G$, $P= G+k-1$ is the number of B-spline basis functions used for each basis function.
The best approximation error for functions in $ H_{1}(C^{\alpha}([0,1]^{d}))$, when approximated by elements of $\mathcal{A}$, has a lower bound such that
\begin{equation}\label{eq:vc_error}
\varepsilon_{\alpha,d}(\mathcal{A})\geq C \Big
  (NP(d+1)(d+N+1)\log((d + 1)P)\Big)^
  {-\alpha/d},
\end{equation}
where $d$ denotes the dimension of the input vector, $N$ represents the number of basis functions in second-layer B-spline KANs.
\end{theorem}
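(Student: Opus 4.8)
The plan is to obtain \eqref{eq:vc_error} by pitting two opposing bounds on the Vapnik--Chervonenkis dimension of $\mathcal{A}$ against each other, where $\mathcal{A}$ denotes the set of all functions realizable by the two-layer B-spline KAN of shape $[d,N,1]$ with fixed degree $k$ and $G$ knots. On one side, Lemma~\ref{lem:VC_poly} bounds $\operatorname{VCDim}(\mathcal{D}\circ\mathcal{A})$ \emph{from above} in terms of the number of parameters $M$ and units $U$ of the realizing network. On the other side, the inequality recorded just after Lemma~\ref{lem:VC&approx_holder}, namely $\varepsilon_{\alpha,d}(\mathcal{A})\ge \operatorname{VCDim}(\mathcal{A})^{-\alpha/d}/9$, bounds the best approximation error \emph{from below} in terms of the same quantity. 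Substituting the first estimate into the second and absorbing all absolute constants into a single $C$ will yield the claim; the hypothesis $\operatorname{VCDim}(\mathcal{A})\ge1$ needed to invoke Lemma~\ref{lem:VC&approx_holder} holds for any nontrivial architecture.

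The crux is reading off $M$ and $U$ for this network. The first KAN layer carries $dN$ edge activations $\phi_{0,j,i}$ and the second carries $N$ edge activations $\phi_{1,j}$, a total of $N(d+1)$ activations; since each activation $\phi_{l,i,j}(x)=\sum_{q=0}^{G+k-1}c_qB_q(x)$ has $P=G+k-1$ trainable spline coefficients (the knot vector being fixed), this gives $M=NP(d+1)$. The computational graph has $d$ input nodes, $N$ hidden nodes, and $1$ output node, so $U=d+N+1$, and every edge activation is a piecewise polynomial of degree at most $k$ with $P$ pieces. Feeding $M$, $U$, the degree $k$, and $P$ into Lemma~\ref{lem:VC_poly} yields
\begin{equation*}
\operatorname{VCDim}(\mathcal{D}\circ\mathcal{A})=O\Big(MU\log\big((k+1)P\big)\Big)=O\Big(NP(d+1)(d+N+1)\log\big((d+1)P\big)\Big),
\end{equation*}
the last expression being the logarithmic factor as written in the statement.

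To finish, since $t\mapsto t^{-\alpha/d}$ is decreasing, this upper bound on $\operatorname{VCDim}$ becomes, after raising to the power $-\alpha/d$, a lower bound; combining it with $\varepsilon_{\alpha,d}(\mathcal{A})\ge \operatorname{VCDim}(\mathcal{A})^{-\alpha/d}/9$ and collecting $1/9$ together with the implied constant of the $O(\cdot)$ into $C$ gives exactly \eqref{eq:vc_error}.

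I expect the main obstacle to be the bookkeeping in the middle step --- specifically, justifying that the edge-based nonlinearities of a KAN fit the feedforward-network model underlying Lemma~\ref{lem:VC_poly}, for instance by viewing each fixed B-spline basis evaluation as a piecewise-polynomial unit with the spline coefficients playing the role of ordinary edge weights, and then checking that this reformulation reproduces precisely the factors $NP(d+1)$ and $(d+N+1)$. A minor, purely cosmetic point is the symbol clash between the polynomial degree $k$ and the input dimension $d$ inside the logarithm; one either keeps $k$ there or reads the statement's $d$ as a stand-in for the degree of the B-spline pieces.
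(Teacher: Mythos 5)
Your proposal is correct and follows essentially the same route as the paper's proof: it combines the VC-dimension upper bound of Lemma~\ref{lem:VC_poly} (with the same counts $M=NP(d+1)$ and $U=d+N+1$) with the inequality $\varepsilon_{\alpha,d}(\mathcal{A})\ge \operatorname{VCDim}(\mathcal{A})^{-\alpha/d}/9$ derived from Lemma~\ref{lem:VC&approx_holder}. Your side remarks---that the monotonicity of $t\mapsto t^{-\alpha/d}$ is what turns the upper bound on the VC-dimension into a lower bound on the error, and that the degree symbol inside the logarithm should really be $k$ rather than the dimension $d$---are both points the paper glosses over, but they do not change the argument.
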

\begin{proof}
According to \autoref{lem:VC_poly}
a neural network with $M$ parameters and $U$ units with activation functions that are piecewise polynomials with at most $P$ pieces and of degree at most $d$. Let $\mathcal{A}$ be the set of functions computed by this network. Then
$$\operatorname{VCDim}(\mathcal{A}) = \operatorname{VCDim}(\mathcal{D} \circ \mathcal{A})= O(MU\log\left((d + 1)P)\right)=CMU\log\left((d + 1)P)\right),$$
for our two-layer B-spline KANs, the total number of parameters is $M = (dN+N)P$, and the number of units is $ U = d+N+1$. Thus, the error in the theorem can be expressed in the following form
\begin{equation*}
\begin{aligned}
\operatorname{VCDim}(\mathcal{A}) & = \operatorname{VCDim}(\mathcal{D} \circ \mathcal{A}) = C P(dN+N)(d+N+1)\log\left((d + 1)P)\right)\\
  & \leq  C NP(d+1)(d+N+1)\log\left((d + 1)P)\right),
\end{aligned}
\end{equation*}
so we have
\begin{equation*}
\begin{aligned}
  \varepsilon_{\alpha,d}(\mathcal{A}) &
  \geq \operatorname{VCDim}(\mathcal{A})^{-\alpha/d}/9 \\
  & \geq C \Big
  (NP(d+1)(d+N+1)\log((d + 1)P)\Big)^
  {-\alpha/d},
\end{aligned}
\end{equation*}
hence, we have \eqref{eq:vc_error}.
\end{proof}

A high VC-dimension is a prerequisite for achieving a favorable approximation error; however, it is not a sufficient condition on its own. The approximation error is also contingent upon additional structural attributes of the hypothesis space $\mathcal{A}$, for instance,
$$ \operatorname{VCDim}\Big(\{\phi:\phi(x)=\cos(ax),a\in R\}\Big) = \infty. $$
However, this set fails to achieve a satisfactory approximation error when approximating H\"older continuous functions, as demonstrated in \cite{ShenYangZhang2021}. Therefore, constructing a hypothesis space with a large VC-dimension is a fundamental step towards achieving an effective approximation tool, but realizing the full potential of approximation capabilities necessitates a more nuanced design of the hypothesis space. This refined design philosophy has been central to our work in this paper, which posits that a well-crafted hypothesis space is a necessary condition for our Kolmogorov–Arnold Networks (KANs) to achieve superior approximation capabilities.

\section{Dynamical systems and discovery of dynamics}
In this section, we first present the essential notations and definitions, consistent with the conventions established in \cite{DDLMM2021}. For a more exploration of LMMs, refer to \cite{DAmbrosio2023a}. Next, we introduce a numerical scheme for an inverse problem in the discovery of dynamical systems using LMMs. Mathematically, this involves solving for the values of $\boldsymbol{f}$ given $\boldsymbol{x}$, resulting in a system of linear equations. We then analyze the numerical error associated with this system.

\subsection{LMMs: Notation and concepts}
Suppose $ d > 0$ is the dimension of the dynamics, consider the ordinary differential equation (ODE)
\begin{equation}\label{eq:IVP}
\begin{aligned}
&\frac{\mathrm{d}}{\mathrm{d}t}\boldsymbol{x}(t)=\boldsymbol{f}(\boldsymbol{x}(t)),\quad 0\leq t\leq T,\\
&\boldsymbol{x}(0)=\boldsymbol{x}_{0}.
\end{aligned}
\end{equation}
Let $ \boldsymbol{x} \in C^\infty[0,T]^d $ be an unknown vector-valued state function, $ \boldsymbol{f}: \mathbb{R}^d \rightarrow \mathbb{R}^d $ be a given vector-valued governing function, $ \boldsymbol{x}_{0} \in \mathbb{R}^d $ be a given initial vector. To seek a numerical solution, we assume a grid on the interval $[0,T]$ defined to be a set of points: $0=t_0 < t_1 < \cdots < t_{N_1} = T$ with equidistant mesh  {$t_{n+1} - t_n=h=\frac{T}{N_1}$, $n \in\{ 0, 1, \ldots, N_1\}$}. Let $[ 0,T]_h$ denote this ordered set. We denote the set of grid functions $\Gamma_h[0,T] = \left\{\boldsymbol{x} | \boldsymbol{x}\in \mathbb{R}^{(N_1+1)\times d}, \boldsymbol{x}_{n}=\boldsymbol{x} (t_n)\in \mathbb{R}^{d} ,t_n \in [0,T]_h   \right\} $, the objective for solving the initial value problem is to find an approximate value $ \boldsymbol{x}_n \approx \boldsymbol{x}(t_n) $ for each $ n $ when $ \boldsymbol{f}(\boldsymbol{x}) $ is given\cite{DAmbrosio2023a}.

An $M$-step LMM approximates the $n$-th value $\boldsymbol{x}_n = \boldsymbol{x}(t_n)$ in terms of the previous $M (M\geq 1)$ time steps $\boldsymbol{x}_{n-1}, \boldsymbol{x}_{n-2}, \ldots, \boldsymbol{x}_{n-M}$ \cite{Zarowski2004,DAmbrosio2023a}. An $M$-step linear multistep method is given by
\begin{equation}\label{eq:LMMs}
\sum_{m=0}^M \alpha_m \boldsymbol{x}_{n-m} = h \sum_{m=0}^M \beta_m f(\boldsymbol{x}_{n-m}), \quad n = M, M+1, \cdots, N_1,
\end{equation}
 where $\boldsymbol{x} \in \Gamma_h[0,T]$, the coefficients $\alpha_m, \beta_m \in \mathbb{R}$ for $m= 0, 1, \ldots, M,$ and  and $\alpha_0$ is always nonzero. By the scheme, all $\boldsymbol{x}_n$ are evaluated iteratively from $n = M$ to $n = N_1$. In each step, $\boldsymbol{x}_{n-M}, \ldots, \boldsymbol{x}_{n-1}$ are all given or computed previously such that $\boldsymbol{x}_n$ can be computed by solving algebraic equations. If $\beta_0 = 0$, the scheme is called explicit since $\boldsymbol{x}_n$ does not appear on the right-hand side and $\boldsymbol{x}_n$ can be computed directly by
\begin{equation*}
\boldsymbol{x}_n = \alpha_0^{-1} \sum_{m=1}^{M} \left(h \beta_m f(\boldsymbol{x}_{n-m}) - \alpha_m \boldsymbol{x}_{n-m}\right).
\end{equation*}
Otherwise, the scheme is called implicit and it requires solving nonlinear equations for $\boldsymbol{x}_n$. The first value $\boldsymbol{x}_0$ is simply set as $\boldsymbol{x}_0 = \boldsymbol{x}(t_0)$, while other initial values $\boldsymbol{x}_1, \ldots, \boldsymbol{x}_{M-1}$ need to be computed by other approaches before performing the LMMs if $M > 1$. Common types of LMMs include Adams-Bashforth (AB) schemes, Adams-Moulton (AM) schemes, and backwards differentiation formula (BDF) schemes.

Next, we introduce the definition of the convergence order of numerical methods, along with the corresponding definitions involved.

\begin{definition}[Residual operator \cite{DAmbrosio2023a}]\label{Def:Residual operator}
Let $\hat{\boldsymbol{x}} \in \Gamma_h[0,T]$. We define the numerical residual operator associated to (\ref{eq:LMMs}) as
\begin{equation}\label{eq:Residual operator}
(R_h \hat{\boldsymbol{x}})_n := \frac{1}{h} \sum_{m=0}^M \alpha_m \hat{\boldsymbol{x}}_{n-m} - \sum_{m=0}^M \beta_m \boldsymbol{f}(\hat{\boldsymbol{x}}_{n-m}), \quad
 n = M, M+1, \ldots, N_1.
\end{equation}
\end{definition}

\begin{definition}[Local truncation error \cite{DAmbrosio2023a}]\label{Def:LocalError}
Let $\boldsymbol{x} \in \Gamma_h[0,T]$ be the exact solution of the dynamic system (\ref{eq:IVP}) defined at the grid coordinates. The local truncation error
\begin{equation}\label{eq:LocalError}
  T(t,\boldsymbol{x};h)_n=\frac{1}{h} \sum_{m=0}^M \alpha_m \boldsymbol{x}_{n-m} - \sum_{m=0}^M \beta_m \boldsymbol{f}(\boldsymbol{x}_{n-m}), \quad
 n = M, M+1, \ldots, N_1.
\end{equation}
\end{definition}

\begin{definition}[Order of error \cite{DAmbrosio2023a}]\label{Def:ErrorOrder}
 A linear multistep method (\ref{eq:LMMs}) has error order $p$ if, for a chosen vector norm $\left\|\cdot\right\|$, there exists a real constant $C >0$ such that
 $$\| \boldsymbol{T}(t,\boldsymbol{x};h)\|\leq Ch^p,$$
where $\boldsymbol{T}(t,\boldsymbol{x};h)=
\Big(T(t,\boldsymbol{x};h)_M, T(t,\boldsymbol{x};h)_{M+1}, \ldots, T(t,\boldsymbol{x};h)_{N_1}\Big) \in \mathbb{R}^{(N_1-M+1)\times d}$ and $C$ is independent on $t, \boldsymbol{x}, h$.
\end{definition}

\subsection{Discovery of dynamics}
The discovery of dynamics can be viewed as an inverse process of solving a dynamical system defined by (\ref{eq:IVP}). Given information on the state $\boldsymbol{x}$ at equidistant time steps $ \{t_n\}_{n=0}^{N_1} $, our goal is to recover $ \boldsymbol{f} $, the governing function of the state.

Let $\boldsymbol{x}(t) \in C^{\infty}([0, T])^d $ and $ \boldsymbol{f}(\cdot): \mathbb{R}^d \rightarrow \mathbb{R}^d $ be two vector-valued functions that are both unknown. Given the observations $ \boldsymbol{x}_n = \boldsymbol{x}(t_n) $ for $ n = 0, \ldots, N $, we aim to determine $ \boldsymbol{f}(\cdot) $. An effective method is to establish a discrete relationship between $ \boldsymbol{x}_n $ and $ \boldsymbol{f}_n \approx \boldsymbol{f}(\boldsymbol{x}_n) $ using LMMs \cite{DDLMM2021}, expressed as:
\begin{equation}\label{eq:LMMs_Discovery}
h \sum_{m=0}^{M} \beta_m \boldsymbol{f}_{n-m} = \sum_{m=0}^{M} \alpha_m \boldsymbol{x}_{n-m}, \quad n = M, M+1, \ldots, N_1.
\end{equation}
Here, $ \boldsymbol{f}_n \in \mathbb{R}^d $ serves as an approximation of $ \boldsymbol{f}(\boldsymbol{x}_n) $. This discovery process is essentially the inverse of solving the dynamical system. To simplify, we focus on a scalar system, resulting in the following equation:
\begin{equation}\label{eq:LMMs_Discovery1}
h \sum_{m=0}^{M} \beta_m f_{n-m} = \sum_{m=0}^{M} \alpha_m x_{n-m}, \quad n = M, M+1, \ldots, N_1.
\end{equation}
It is important to note that $ f_n $ may not be present in (\ref{eq:LMMs_Discovery1}) for some indices $ n $ between $0$ and $N_1$. For instance, in AB schemes, $ f_{N_1} $ is not included in (\ref{eq:LMMs_Discovery1}) because $ \beta_0 = 0 $. Generally, for a given LMM, we define $r$ and $q$ as the first and last indices where $ f_r $ and $ f_{q} $ appear in (\ref{eq:LMMs_Discovery1}) with non-zero coefficients (i.e., $ \beta_{M-r}$ and $\beta_{N_1-q}$ are both non-zero). We denote $\tau :=q-r+1$ as the total number of $f_n$ involved in (\ref{eq:LMMs_Discovery1}).
For each linear $M$-step method, it is supposed to compute all unknowns $\{f_n\}_{n=r}^{q}$ by the linear relation (\ref{eq:LMMs_Discovery1}). We write
{\setlength{\arraycolsep}{2pt}
\begin{equation*}
\overrightarrow{\boldsymbol{f}_h} := \begin{bmatrix} f_r & f_{r+1} & \cdots & f_{q} \end{bmatrix}^{\mathrm{T}} \in \mathbb{R}^{\tau},
\end{equation*}
\begin{equation*}
\overrightarrow{\boldsymbol{b}_h} := \frac{1}{h} \left[ \sum_{m=0}^{M} \alpha_m x_{M-m} \quad \sum_{m=0}^{M} \alpha_m x_{M+1-m} \quad \cdots \quad \sum_{m=0}^{M} \alpha_m x_{N_1-m} \right]^{\mathrm{T}} \in \mathbb{R}^{N_1-M+1},
\end{equation*}
\begin{equation*}
\boldsymbol{B}_h := \begin{bmatrix}
\beta_{M-r} & \beta_{M-r-1} & \cdots & \beta_{N_1-q} & & \\
& \beta_{M-r} & \beta_{M-r-1} & \cdots & \beta_{N_1-q} & & \\
&& \ddots & \ddots & \ddots & \ddots & \\
&&& \beta_{M-r} & \beta_{M-r-1} & \cdots & \beta_{N_1-q}
\end{bmatrix} \in \mathbb{R}^{(N_1-M+1) \times \tau}.
\end{equation*}}

Then (\ref{eq:LMMs_Discovery}) leads to the following linear system
\begin{equation}\label{eq:linear_system}
\boldsymbol{B}_h \overrightarrow{\boldsymbol{f}_h} = \overrightarrow{\boldsymbol{b}_h}.
\end{equation}

In (\ref{eq:LMMs_Discovery}), the number of equations may not match the number of unknowns. For AB and AM schemes, it is insufficient to determine $ \{f_n\}_{n=r}^{q} $ because there are fewer equations than unknowns. This results in the linear system (\ref{eq:linear_system}) being underdetermined.

To address this issue, we can introduce $ \Omega_a:=q-(N_1-M+1) $ auxiliary linear conditions to ensure $ \{f_n\}_{n=r}^{q} $ is uniquely determined. For instance, we can compute $ \Omega_a $ unknown $ f_n $ values directly using a first-order (derivative) finite difference method (FDM) based on related data. To maintain consistency, the chosen FDM should have the same error order as the LMM. Assuming the LMM has order $ p $, a straightforward approach is to calculate the initial $ \Omega_a $ unknowns using a FDM of order $p$,
\begin{equation}\label{eq:LMMs_DiscoveryMatix}
f_n = \frac{1}{h} \sum_{m=0}^{p} \mu_m x_{n+m}, \quad n = r, r+1, \ldots, r + \Omega_a - 1,
\end{equation}
where $\mu_m$ are the corresponding finite difference coefficients. Note that (\ref{eq:LMMs_DiscoveryMatix}) has the error estimate
\begin{equation*}
\max_{r \leq n \leq r + \Omega_a - 1} \left| f_n - f(\boldsymbol{x}(t_n)) \right| = O(h^p), \quad \text{as } h \rightarrow 0.
\end{equation*}
Then combining (\ref{eq:LMMs_Discovery}) and (\ref{eq:LMMs_DiscoveryMatix}) leads to the augmented linear system
\begin{equation}\label{eq:LMMs_DiscoveryMatixA}
\boldsymbol{A}_h \overrightarrow{\boldsymbol{f}_h} = \left[ \begin{array}{c} \boldsymbol{c}_h \\ \overrightarrow{\boldsymbol{b}_h} \end{array} \right],
\end{equation}
where
\begin{equation*}
\boldsymbol{c}_h := \frac{1}{h} \left[ \sum_{m=0}^{p} \mu_m x_{s+m} \quad \sum_{m=0}^{p} \mu_m x_{s+1+m} \quad \cdots \quad \sum_{m=0}^{p} \mu_m x_{s+\Omega_a-1+m} \right]^T \in \mathbb{R}^{\Omega_a},
\end{equation*}
\begin{equation*}
\boldsymbol{A}_h := \left[ \begin{array}{c} \boldsymbol{C} \\ \boldsymbol{B}_h \end{array} \right] \quad \text{and} \quad \boldsymbol{C} := \left[ \begin{array}{ll} \boldsymbol{I}_{\Omega_a} & \boldsymbol{O} \end{array} \right]
\end{equation*}
with $\boldsymbol{I}_{\Omega_a}$ being the $\Omega_a \times \Omega_a$ identity matrix and $\boldsymbol{O}$ being the zero matrix of size $\Omega_a \times (\tau - \Omega_a)$. Clearly, (\ref{eq:LMMs_DiscoveryMatixA}) has a unique solution since the coefficient matrix is lower triangular with nonzero diagonals. Moreover, if $M \ll N_1$, the linear system (\ref{eq:LMMs_DiscoveryMatixA}) is sparse.

In general, as noted in \cite{DDLMM2021}, we can define auxiliary conditions in various ways beyond what has been discussed. Different types of auxiliary conditions, such as initial and terminal conditions\cite{GulgecShiDeshmukhPakzadEtAl2019}, can affect the stability and convergence of the method, as further discussed in \cite{DDLMM2021}. A key question is whether the regularization effect from neural network approximations could help alleviate these issues.

\section{Convergence Analysis}
In this section, we first present the approach of using neural networks to approximate the linear system derived from the discretized LMMs scheme (\ref{eq:linear_system}), as well as the linear system obtained after incorporating auxiliary conditions. We then analyze the relationship between the error of the learned governing function $\boldsymbol{f}_{\mathcal{NN}}$ and the original governing function $\boldsymbol{f}$, which consists of both the discretization error of the numerical scheme and the network approximation error. Finally, we trace the error in the governing function back to the error in the original state function $\boldsymbol{x}$.

\subsection{Network-based discovery method}
Let us revisit the dynamics discovery on a single trajectory, as introduced in Section 3. Conventional LMMs are straightforward to implement, yielding solutions by solving a linear system. However, this approach only computes the governing function \(\boldsymbol{f}\) at fixed, equidistant time steps, leaving the relationship between \(\boldsymbol{f}\) and the state \(\boldsymbol{x}\) unknown. One way to address this is to approximate each component of \(\boldsymbol{f}\) with structured functions, such as neural networks, polynomials, or splines allowing closed-form expressions for \(\boldsymbol{f}\) through optimization. Once \(\boldsymbol{f}\) is explicitly recovered, future states \(\boldsymbol{x}\) on the same or nearby trajectories can be predicted by solving (\ref{eq:IVP}) with initial or perturbed conditions.

Neural networks are especially effective for this purpose, particularly when \(d\) is moderately large, as they handle high-dimensional inputs better than other structures. Thus, we focus on network-based methods here, though the approach generalizes to other types of approximations.

We consider neural network approximations based on the LMMs scheme (\ref{eq:LMMs_Discovery1}). Generally, We use $\mathcal{NN}(k, G, N)$ to denote the set of all two-layer B-spline KANs with $k$-degree, $G$ nodes, and $N$ represents the number of basis
functions in second layer. Now we introduce a network $f_{\mathcal{NN}}(\boldsymbol{x})\in \mathcal{NN}(k, G, N)$ to approximate $f(\boldsymbol{x})$ an arbitrary component of  $\boldsymbol{f}(\cdot)$. The neural network method can be developed by
\begin{equation}\label{eq:NN_discovery}
h\sum_{m=0}^M\beta_m f_{\mathcal{NN}}(\boldsymbol{x}_{n-m})=\sum_{m=0}^M\alpha_m \boldsymbol{x}_{n-m},\quad n=M,M+1,\ldots,N_1,
\end{equation}
where $\boldsymbol{x}_{n}$ for $n=0, \ldots, N_1$ are given sample locations.

Unfortunately, if the VC-dimension of $\mathcal{NN}(k, G, N)$ is very small, then there does not exist an $f_{\mathcal{NN}} \in \mathcal{NN}(k, G, N)$ that can exactly satisfy (\ref{eq:NN_discovery}). Therefore, we typically seek $f_{\mathcal{NN}} \in \mathcal{NN}(k, G, N)$ by minimizing the residual of (\ref{eq:NN_discovery}) within a machine learning framework, such that
\begin{equation}\label{eq:min_NN}
 J_h(f_{\mathcal{NN}})=\min_{u \in \mathcal{NN}(k, G, N)}J_h(u),
\end{equation}
where
\begin{equation}\label{eq:Jh}
J_h(u):=\frac1{N_1-M+1}\sum_{n=M}^{N_1}\left|\sum_{m=0}^M\beta_mu(\boldsymbol{x}_{n-m})-\sum_{m=0}^Mh^{-1}\alpha_mx_{n-m}\right|^2.
\end{equation}
However, similar to the underdetermined linear system (\ref{eq:linear_system}) which has an infinite number of solutions, ensuring a unique minimizer at the grid points necessitates additional conditions. The conditions are introduced and an augmented loss function is constructed based on equation (\ref{eq:Jh}) to fulfill the objective, as detailed in the work \cite{DuGuYangZhou}.

The auxiliary minimization problem is formulated to find the optimal neural network function $f_{\mathcal{NN}}$
within the space of neural networks $\mathcal{NN}(k, G, N)$, as depicted in Equation \eqref{eq:auxiliarymin_NN}:
\begin{equation}\label{eq:auxiliarymin_NN}
 J_{a,h}(f_{\mathcal{NN}})=\min_{u \in \mathcal{NN}(k, G, N)}J_{a,h}(u).
\end{equation}
The objective function $J_{a,h}(u)$
is defined as the sum of squared errors over a training dataset, which includes both the approximation error and the collocation error, as shown in Equation \eqref{eq:Jah}:

\begin{equation} \label{eq:Jah}
J_{a,h}(u):=\frac{1}{\tau} \Bigg(\sum_{n=s}^{s+N_a-1}
\left|u(\boldsymbol{x}_n)-\frac{1}{h}\sum_{m=0}^p\mu_mx_{n+m}\right|^2
 +\sum_{n=M}^{N_1}\left|\sum_{m=0}^M\beta_mu(\boldsymbol{x}_{n-m})-\sum_{m=0}^Mh^{-1}
\alpha_mx_{n-m}\right|^2\Bigg).
\end{equation}
This enhanced optimization strategy ensures the uniqueness of the minimizer at the grid points within the function space. For further details on the network implementation process, refer to Figure \ref{fig:KAN_LMM}.
\begin{figure}[htp!]
  \centering
  \includegraphics[width=1\linewidth]{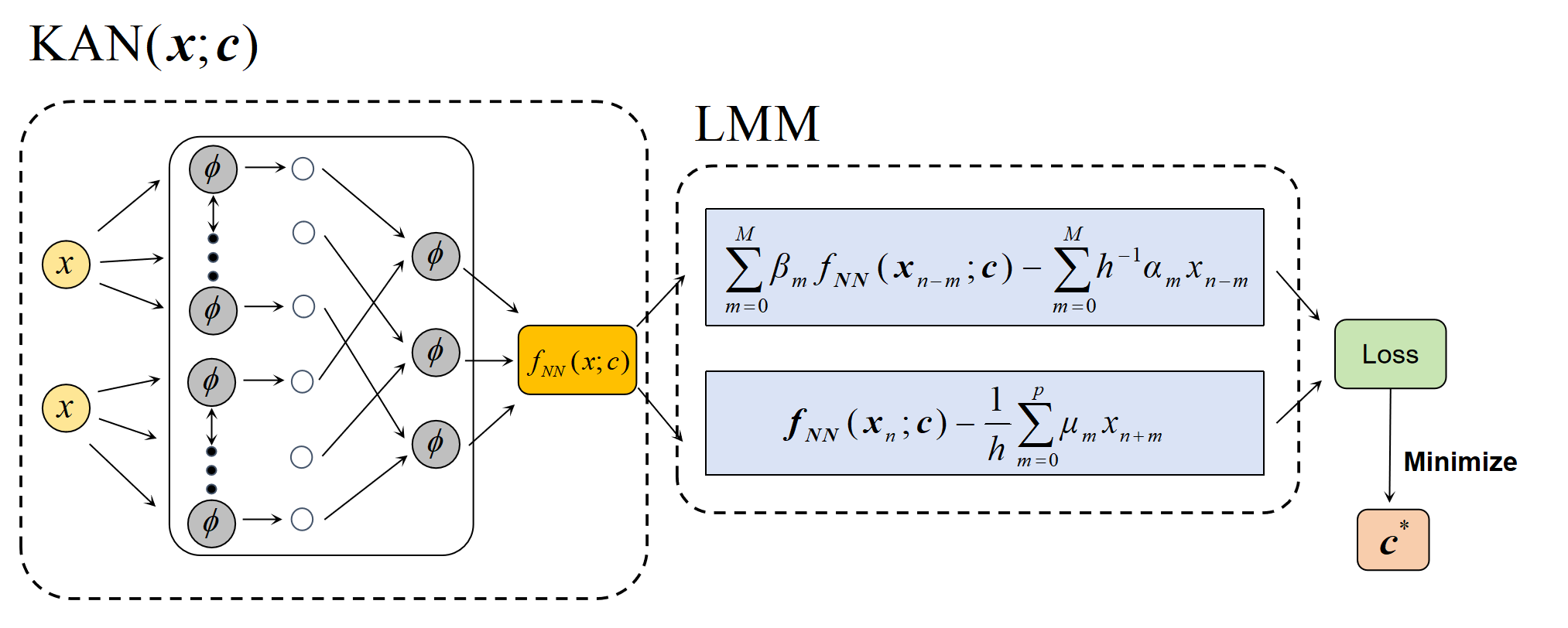}
  \caption{Schematic representation of the network architecture employing LMMs in conjunction with KANs for dynamical system discovery.}
  \label{fig:KAN_LMM}
\end{figure}
\subsection{Error estimates}
We consider the error estimation of the discovery on the trajectory $\Delta:=\{\boldsymbol{x}(t):0\leq t\leq T\}.$

\begin{definition}[$l^{2}$ seminorm \cite{DuGuYangZhou}]\label{Def:L2seminorm}
Let $\Delta:=\{\boldsymbol{x}(t):0\leq t\leq T\}$, for any $f \in C(\Delta)$ with a given $h >0$, the $l^{2}$ seminorm as
\begin{equation}\label{eq:L2seminorm}
  |f|_{2,h}:=\left(\frac{1}{N_1+1}\sum_{n=0}^{N_1}|f(\boldsymbol{x}_n)|^2\right)^{1/2}.
\end{equation}
\end{definition}

As discussed above, for a specific LMM, some states in $\{\boldsymbol{x}_n\}_{n=0}^{N_1}$ may not be involved in the  scheme. For fairness, we study the convergence at all involved states $\{\boldsymbol{x}_n\}_{n=r}^{q}$. Therefore, we rewrite $|f|_{2,h}$ as the LMM-related seminorm$$|f|_{2,h}:=\left(\frac{1}{ \tau}\sum_{n=r}^{q}|f(\boldsymbol{x}_n)|^2\right)
^{1/2} \quad \text{for all}~f \in C(\Delta).$$
Next, we represent \(\{f(\boldsymbol{x}_n)\}_{n=r}^{q}\) as the vector \(\vec{\boldsymbol{f}} := \left[f(\boldsymbol{x}_r) ~ f(\boldsymbol{x}_{r+1})~ \ldots ~ f(\boldsymbol{x}_{q})\right]^T\) and aim to estimate the distance between \(f_{\mathcal{NN}}\) and \(f\). Here, \(f_{\mathcal{NN}}\) denotes the minimizer of (\ref{eq:auxiliarymin_NN}), the result of our network optimization. To evaluate these errors, we account for both the numerical discretization error and the network approximation error, and we present their relationship below.

\begin{lemma}[Theorem 5.1 of \cite{DuGuYangZhou}]\label{lem:error_f_fnn}
	In the dynamical system (\ref{eq:IVP}), suppose $\boldsymbol{x}(t) \in C^{\infty}([0, T])^d $ and $\boldsymbol{f}$ is defined in $\Delta '$, a small neighborhood of $\Delta$. Let $f$ be an arbitrary component of $\boldsymbol{f}$ and $h := T /N_1$ with an integer $N_1 > 0$, then
\begin{equation}\label{eq:error_f_fnnA}
\left|f_{\mathcal{A}}-f\right|_{2,h}<C\kappa_2
(\boldsymbol{A}_h)\left(h^p+e_\mathcal{A}\right),
\end{equation}
where $f_{\mathcal{A}} \in \mathcal{A}$ is a global minimizer of $J_{a,h}$ defined by (\ref{eq:Jah}) corresponding to an LMM with order $p$ and $\kappa_2
(\boldsymbol{A})=
\|\boldsymbol{A}\|_2\|\boldsymbol{A}^{-1}\|_2$.
\end{lemma}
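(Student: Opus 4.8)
The plan is to bound the seminorm $|f_{\mathcal{A}}-f|_{2,h}$ by controlling the residual that $f_{\mathcal{A}}$ produces in the augmented linear system \eqref{eq:LMMs_DiscoveryMatixA}, and then to invert the matrix $\boldsymbol{A}_h$ in a stable way via its condition number. First I would introduce the vector $\vec{\boldsymbol{f}}_{\mathcal{A}} := [f_{\mathcal{A}}(\boldsymbol{x}_r)~\cdots~f_{\mathcal{A}}(\boldsymbol{x}_q)]^T$ of grid values of the network minimizer and the corresponding vector $\vec{\boldsymbol{f}}$ of true values $f(\boldsymbol{x}_n)$. Applying $\boldsymbol{A}_h$ to the difference $\vec{\boldsymbol{f}}_{\mathcal{A}}-\vec{\boldsymbol{f}}$ yields, by definition of the residual operator \eqref{eq:Residual operator} and the FDM rows \eqref{eq:LMMs_DiscoveryMatix}, a vector whose entries are exactly the pointwise residuals of $f_{\mathcal{A}}$ in the combined scheme. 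Since $\boldsymbol{A}_h$ has a left inverse (it is lower triangular with nonzero diagonal), one gets $\|\vec{\boldsymbol{f}}_{\mathcal{A}}-\vec{\boldsymbol{f}}\|_2 \leq \|\boldsymbol{A}_h^{-1}\|_2\,\|\boldsymbol{A}_h(\vec{\boldsymbol{f}}_{\mathcal{A}}-\vec{\boldsymbol{f}})\|_2$, and then $|f_{\mathcal{A}}-f|_{2,h} = \tau^{-1/2}\|\vec{\boldsymbol{f}}_{\mathcal{A}}-\vec{\boldsymbol{f}}\|_2$ converts this to the seminorm on the left-hand side.

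Next I would estimate $\|\boldsymbol{A}_h(\vec{\boldsymbol{f}}_{\mathcal{A}}-\vec{\boldsymbol{f}})\|_2$ by splitting it into three contributions. The first is the residual of the \emph{exact} solution $f$ in the LMM scheme, which is precisely $h$ times the local truncation error vector $\boldsymbol{T}(t,\boldsymbol{x};h)$ from \autoref{Def:LocalError}; by \autoref{Def:ErrorOrder} this is $O(h^{p})$ in norm (after suitable scaling, $O(h^{p})$ per entry times $\sqrt{\tau}$ entries, matching the $\tau^{-1/2}$ prefactor). The second is the residual of the exact solution in the FDM rows, which by the stated error estimate $\max_n |f_n - f(\boldsymbol{x}(t_n))| = O(h^p)$ is also $O(h^p)$. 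The third is the residual of the \emph{minimizer} $f_{\mathcal{A}}$ in the augmented system, which is controlled by $J_{a,h}(f_{\mathcal{A}})^{1/2}$; since $f_{\mathcal{A}}$ is the global minimizer, $J_{a,h}(f_{\mathcal{A}}) \leq J_{a,h}(\phi)$ for any $\phi \in \mathcal{A}$, and choosing $\phi$ to be a best approximant to $f$ in $\mathcal{A}$ bounds this by the approximation error $e_{\mathcal{A}}$ plus the discretization terms already accounted for. Collecting these, $\|\boldsymbol{A}_h(\vec{\boldsymbol{f}}_{\mathcal{A}}-\vec{\boldsymbol{f}})\|_2 \lesssim \sqrt{\tau}\,(h^p + e_{\mathcal{A}})$.

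Finally I would combine the two estimates. Writing $\|\boldsymbol{A}_h^{-1}\|_2 \leq \kappa_2(\boldsymbol{A}_h)/\|\boldsymbol{A}_h\|_2$ and noting that $\|\boldsymbol{A}_h\|_2$ is bounded below by a constant independent of $h$ (its rows contain fixed LMM/FDM coefficients), the bound $|f_{\mathcal{A}}-f|_{2,h} \leq \tau^{-1/2}\|\boldsymbol{A}_h^{-1}\|_2 \cdot \sqrt{\tau}(h^p+e_{\mathcal{A}}) \leq C\kappa_2(\boldsymbol{A}_h)(h^p+e_{\mathcal{A}})$ follows, which is \eqref{eq:error_f_fnnA}. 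The main obstacle I anticipate is the bookkeeping in the second step: one must be careful that the residual of $f_{\mathcal{A}}$ is genuinely dominated by $J_{a,h}(f_{\mathcal{A}})^{1/2}$ and that substituting a near-optimal $\phi$ does not reintroduce a factor depending on $h$ or on $\kappa_2(\boldsymbol{A}_h)$ in the wrong place — in particular, the cross terms between "$f_{\mathcal{A}}$ vs. $\phi$" and "$\phi$ vs. $f$" must be handled by a triangle inequality that keeps the approximation error $e_{\mathcal{A}}$ and the truncation error $h^p$ additive rather than multiplicative. Since this is exactly Theorem 5.1 of \cite{DuGuYangZhou}, I would follow that argument, adapting only the notation for the B-spline KAN hypothesis class $\mathcal{A} = \mathcal{NN}(k,G,N)$.
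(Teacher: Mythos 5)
The paper does not prove this statement: it is imported verbatim as Theorem~5.1 of \cite{DuGuYangZhou}, so there is no in-paper proof to compare against. Your reconstruction is, as far as I can check it against the surrounding definitions, the standard (and correct) argument for results of this type, and it is essentially the argument of the cited theorem: identify $\tau J_{a,h}(u)$ with the squared $\ell^2$-norm of the residual of the augmented system \eqref{eq:LMMs_DiscoveryMatixA}, bound the minimizer's residual by that of a best approximant $\phi$ (which splits, via the triangle inequality and the boundedness of the coefficients $\beta_m$, into the $O(h^p)$ truncation/FDM error of the exact $f$ plus a $\sum_m|\beta_m|\cdot e_{\mathcal{A}}$ term), and then pass from residual to error through $\|\boldsymbol{A}_h^{-1}\|_2$, with the $\sqrt{\tau}$ factors cancelling against the $\tau^{-1/2}$ in the seminorm. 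The only points worth making explicit are the ones you already flag: that $\|\boldsymbol{A}_h\|_2\geq 1$ (immediate from the identity block of $\boldsymbol{C}$), so that $\|\boldsymbol{A}_h^{-1}\|_2\leq\kappa_2(\boldsymbol{A}_h)$, and that $e_{\mathcal{A}}$ must be taken as the sup-norm approximation error over the neighborhood $\Delta'$ containing all sample points so that the substitution of $\phi$ for $f$ is uniform over the grid.
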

\begin{lemma}[Theorem 5.4 of \cite{DuGuYangZhou}]\label{lem:k2_bound}
Let $\boldsymbol{A}_h$ be the matrix defined by \eqref{eq:LMMs_DiscoveryMatixA}, and $p_{h}(z)$ be the following polynomial
\begin{equation*}
  p_{h}(z)= \sum_{i=N-q}^{M-r}\beta_{i}z^{M-r-i}.
\end{equation*}
If all roots of $p_{h}(z)$ have modulus smaller than 1, then $\kappa_2
(\boldsymbol{A}_h)$ is uniformly bounded with respect to $N_1$.
\end{lemma}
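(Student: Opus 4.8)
The plan is to estimate the two factors of $\kappa_2(\boldsymbol{A}_h)=\|\boldsymbol{A}_h\|_2\,\|\boldsymbol{A}_h^{-1}\|_2$ separately and show each is bounded by a constant independent of $N_1$. The bound on $\|\boldsymbol{A}_h\|_2$ is the easy half: by the construction \eqref{eq:LMMs_DiscoveryMatixA}, $\boldsymbol{A}_h$ is banded and its nonzero entries are either $1$ (in the identity block $\boldsymbol{C}$) or one of the finitely many fixed numbers $\mu_0,\dots,\mu_p,\beta_0,\dots,\beta_M$; in particular each row and each column has at most $\max\{p+1,M+1\}$ nonzero entries, each of modulus at most a fixed constant, independent of $N_1$. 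Hence $\|\boldsymbol{A}_h\|_1$ and $\|\boldsymbol{A}_h\|_\infty$ are uniformly bounded, and so is $\|\boldsymbol{A}_h\|_2\le(\|\boldsymbol{A}_h\|_1\|\boldsymbol{A}_h\|_\infty)^{1/2}$.

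The real work is the uniform bound on $\|\boldsymbol{A}_h^{-1}\|_2$, and this is precisely where the root condition on $p_h$ enters. I would first observe that, apart from the $\Omega_a$ rows contributed by $\boldsymbol{C}$, the matrix $\boldsymbol{A}_h$ is lower triangular with constant nonzero diagonal $\beta_{N_1-q}$, and its banded part $\boldsymbol{B}_h$ is a lower-triangular \emph{Toeplitz} matrix whose generating symbol $a(z):=\sum_{s=0}^{\Omega_a}\beta_{N_1-q+s}\,z^{s}$ is exactly the coefficient-reversal of $p_h$, i.e.\ $p_h(z)=z^{\Omega_a}a(1/z)$. Since the extreme coefficients $\beta_{N_1-q}$ and $\beta_{M-r}$ are nonzero, the roots of $a$ are the reciprocals of the roots of $p_h$, so the hypothesis ``all roots of $p_h$ have modulus $<1$'' is equivalent to ``all roots of $a$ have modulus $>1$,'' the classical condition guaranteeing that $1/a(z)$ is analytic on a closed disk of radius $\rho_1>1$. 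Solving $\boldsymbol{A}_h\overrightarrow{\boldsymbol{f}_h}=\boldsymbol{g}$ by forward substitution then amounts to integrating a linear recurrence of fixed order $\Omega_a$ whose characteristic polynomial is $p_h$: the first $\Omega_a$ components of $\overrightarrow{\boldsymbol{f}_h}$ are read off directly from $\boldsymbol{g}$, and the rest are produced by that recurrence forced by the remaining entries of $\boldsymbol{g}$. Writing the solution by the discrete variation-of-constants formula expresses every entry of $\boldsymbol{A}_h^{-1}$ in terms of the fundamental (Green's) sequence of the recurrence and its homogeneous modes.

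The crux of the argument --- and the step I expect to be the main obstacle --- is to turn the root condition into a genuine geometric-decay estimate. Because every characteristic root of $p_h$ lies strictly inside the unit disk, the companion matrix of $p_h$ has spectral radius strictly less than $1$, so its powers satisfy $\|C^{k}\|\le C_0\rho^{k}$ for some $\rho\in(0,1)$ and $C_0>0$ depending only on the LMM coefficients; equivalently, the Taylor coefficients of the analytic function $1/a(z)$ decay like $\rho^{k}$ with $\rho=1/\rho_1$. Propagating this decay through the variation-of-constants representation yields the entrywise bound $\bigl|(\boldsymbol{A}_h^{-1})_{ij}\bigr|\le C_1\rho^{\,|i-j|}$ with $C_1,\rho$ independent of $N_1$. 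Any matrix with such off-diagonal decay has $\|\cdot\|_1,\|\cdot\|_\infty\le C_1\sum_{k\in\mathbb{Z}}\rho^{|k|}=C_1\tfrac{1+\rho}{1-\rho}$, hence $\|\boldsymbol{A}_h^{-1}\|_2\le C_1\tfrac{1+\rho}{1-\rho}$, uniformly in $N_1$. Combining this with the bound on $\|\boldsymbol{A}_h\|_2$ gives $\kappa_2(\boldsymbol{A}_h)\le\bigl(\|\boldsymbol{A}_h\|_1\|\boldsymbol{A}_h\|_\infty\bigr)^{1/2}\,C_1\tfrac{1+\rho}{1-\rho}$, a constant independent of $N_1$, which is the claim.

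One bookkeeping subtlety deserves care: the coupling between the auxiliary block $\boldsymbol{C}=[\boldsymbol{I}_{\Omega_a}\ \boldsymbol{O}]$ and the Toeplitz block $\boldsymbol{B}_h$. Since $\boldsymbol{C}$ only prescribes the first $\Omega_a$ unknowns and $\boldsymbol{B}_h$ is exactly Toeplitz, this coupling perturbs only an $O(1)$-thick corner of the matrix, so it affects the constant $C_1$ in the decay estimate but not the exponential rate $\rho$, and hence not the uniform bound; for the more general auxiliary conditions mentioned after \eqref{eq:LMMs_DiscoveryMatixA} the same strategy applies as long as the corresponding corner block stays uniformly well conditioned. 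In every case the genuinely non-trivial ingredient remains the geometric decay of the fundamental sequence supplied by the root condition on $p_h$.
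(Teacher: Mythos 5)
The paper does not prove this lemma at all: it is imported verbatim as Theorem 5.4 of \cite{DuGuYangZhou}, so there is no in-paper argument to compare against. Judged on its own terms, your proof is correct and is essentially the standard stability argument for this class of banded systems. The key identifications all check out: $\boldsymbol{A}_h$ is lower triangular with diagonal entries $1$ and $\beta_{N_1-q}\neq 0$; the $\boldsymbol{B}_h$ block is lower-triangular Toeplitz with symbol $a(z)=\sum_{s=0}^{\Omega_a}\beta_{N_1-q+s}z^s$ satisfying $p_h(z)=z^{\Omega_a}a(1/z)$ (so the roots are genuinely reciprocal, since $\beta_{M-r}$ and $\beta_{N_1-q}$ are both nonzero by the definition of $r$ and $q$); forward substitution is an order-$\Omega_a$ recurrence whose characteristic polynomial is exactly $p_h$; and the root condition gives geometric decay of the fundamental sequence, hence the off-diagonal decay $|(\boldsymbol{A}_h^{-1})_{ij}|\le C_1\rho^{i-j}$ for $i\ge j$ and the uniform bound on $\|\boldsymbol{A}_h^{-1}\|_2$ via $\|\cdot\|_1$ and $\|\cdot\|_\infty$. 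Your handling of the $\boldsymbol{C}$ block as an $O(1)$ corner perturbation affecting only the constant, not the rate, is also the right observation. Two cosmetic points: the index $N$ in the paper's statement of $p_h$ is a typo for $N_1$ (your reading is the intended one), and the $\mu_m$ coefficients you list among the entries of $\boldsymbol{A}_h$ actually live in the right-hand side $\boldsymbol{c}_h$, not in the matrix --- harmless, since the bound on $\|\boldsymbol{A}_h\|_2$ only needs the $0/1$ and $\beta$ entries.
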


\begin{theorem}\label{thm:error_f_fnn}
In the dynamical system (\ref{eq:IVP}), suppose $\boldsymbol{x}(t) \in C^{\infty}([0, T])^d $ and $\boldsymbol{f}$ is defined in $\Delta '$, a small neighborhood of $\Delta$. Let $f$ be an arbitrary component of $\boldsymbol{f}$. Also, let $N_1 > 0$ be an integer and $h := T /N_1$ , then we have
\begin{equation}\label{eq:error_f_fnn}
\left|f_{\mathcal{NN}}-f\right|_{2,h}<C\kappa_2
(\boldsymbol{A}_h)\left(h^p+e_\mathcal{NN}(k, G, N) \right),
\end{equation}
with $e_\mathcal{NN}(k, G, N) = 2R_{\Delta'}N(L_{G}d+1) \cdot \omega_{f}\left(\min \left\{\frac{1}{\sqrt{2k-2}}, \sqrt{\frac{k}{12}} \cdot \frac{1}{G} \right\}\right)$, where $f_{\mathcal{NN}} \in \mathcal{NN}(k, G, N)$ is a global minimizer of $J_{a,h}$ defined by (\ref{eq:Jah}) corresponding to an LMM with order $p$; $C$ is a constant independent of $h, k, G, N$. In particular, if $\kappa_2
(\boldsymbol{A}_h)$ is uniformly bounded for all $h > 0$, then
\begin{equation*}
  \lim_{k,G,N \rightarrow \infty ,h\rightarrow 0}| f_{\mathcal{NN}}-f|_{2,h}=0.
\end{equation*}
\end{theorem}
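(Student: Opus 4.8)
The plan is to derive \eqref{eq:error_f_fnn} by inserting the concrete B-spline KAN approximation estimate into the abstract error bound of \autoref{lem:error_f_fnn}. That lemma is stated for an arbitrary hypothesis class $\mathcal{A}$ and controls $|f_{\mathcal{A}}-f|_{2,h}$ by $C\kappa_2(\boldsymbol{A}_h)(h^p+e_{\mathcal{A}})$, where $e_{\mathcal{A}}$ is the best approximation error of the component $f$ by elements of $\mathcal{A}$ on the neighborhood $\Delta'$ of the trajectory. Specializing $\mathcal{A}=\mathcal{NN}(k,G,N)$ and $f_{\mathcal{A}}=f_{\mathcal{NN}}$, it therefore suffices to exhibit one $\phi\in\mathcal{NN}(k,G,N)$ with $\|\phi-f\|_{L^{\infty}(\Delta')}\le e_{\mathcal{NN}}(k,G,N)$; the claimed inequality then follows by monotonicity in $e_{\mathcal{A}}$, and the limit statement uses the uniform bound on $\kappa_2(\boldsymbol{A}_h)$ supplied by \autoref{lem:k2_bound}.

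To build such a $\phi$, I would first set up the geometry. Since $\boldsymbol{x}\in C^\infty([0,T])^d$, the trajectory $\Delta$ is compact, hence so is a sufficiently small closed neighborhood $\Delta'$, and $\Delta'\subseteq[-R_{\Delta'},R_{\Delta'}]^d$ for some $R_{\Delta'}>0$ (which we may enlarge so that $2R_{\Delta'}\in\mathbb{N}^+$). The component $f$ is continuous on $\Delta'$; by \autoref{lem:Extension_of_Continuousf} it extends to a continuous function on $[-R_{\Delta'},R_{\Delta'}]^d$ with the same modulus of continuity, and by the Kolmogorov--Arnold representation theorem (\autoref{lem:KAT}, after rescaling the cube to $[0,1]^d$) this extension admits a two-layer representation $f=(\boldsymbol{\Phi}_1\circ\boldsymbol{\Phi}_0)\boldsymbol{x}$. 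Hence \autoref{cor:upperbound_of_Bspline_R} applies and yields $\phi=(\boldsymbol{\Phi}_1^{G}\circ\boldsymbol{\Phi}_0^{G})\boldsymbol{x}\in\mathcal{NN}(k,G,N)$ with
\[
\|f-\phi\|\ \le\ N(L_{G}d+1)\,\omega_{f}\!\left(\min\left\{\frac{2R_{\Delta'}}{\sqrt{2k-2}},\ \sqrt{\frac{k}{12}}\cdot\frac{2R_{\Delta'}}{G}\right\}\right).
\]

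Now I would pull the common factor $2R_{\Delta'}$ out of the minimum and invoke the elementary scaling property $\omega_{f}(cr)\le c\,\omega_{f}(r)$ (applicable since $2R_{\Delta'}$ is a positive integer), which bounds the right-hand side by $2R_{\Delta'}N(L_{G}d+1)\,\omega_{f}(\min\{1/\sqrt{2k-2},\ \sqrt{k/12}\cdot1/G\})=e_{\mathcal{NN}}(k,G,N)$. Thus $e_{\mathcal{A}}\le e_{\mathcal{NN}}(k,G,N)$, and substituting into \eqref{eq:error_f_fnnA} produces \eqref{eq:error_f_fnn}. For the limiting claim, as $k,G\to\infty$ the argument of $\omega_{f}$ tends to $0$, so the modulus term vanishes by uniform continuity of $f$ on the compact cube; provided $N(L_{G}d+1)$ is controlled along the way so that $e_{\mathcal{NN}}(k,G,N)\to0$, the right-hand side of \eqref{eq:error_f_fnn} tends to $0$ together with $h^p\to0$ under the uniform bound on $\kappa_2(\boldsymbol{A}_h)$, giving $|f_{\mathcal{NN}}-f|_{2,h}\to0$.

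The geometry and the substitution are routine; the delicate point is the limit. Because the KAN estimate carries an explicit prefactor $N$ and a possibly $G$-dependent Lipschitz constant $L_{G}$, one cannot let $N\to\infty$ freely, so the assertion must be understood as: there is a joint choice $k,G,N\to\infty$, $h\to0$ along which the bound tends to $0$. Making this rigorous requires either imposing H\"older regularity on $f$ (so that $\omega_{f}(r)\le\lambda r^{\alpha}$ decays at a known rate and $N$ can be balanced against $G$, in the spirit of \autoref{cor:a}) or verifying that $L_{G}$ stays uniformly bounded as the B-spline KAN approximant converges to $f$. Everything else is a direct application of the quoted results.
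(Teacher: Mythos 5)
Your proposal is correct and follows essentially the same route as the paper: specialize \autoref{lem:error_f_fnn} to $\mathcal{A}=\mathcal{NN}(k,G,N)$, bound $e_{\mathcal{A}}$ via \autoref{cor:upperbound_of_Bspline_R}, and invoke \autoref{lem:k2_bound} for the uniform bound on $\kappa_2(\boldsymbol{A}_h)$. You in fact supply details the paper leaves implicit --- the extension/Kolmogorov--Arnold setup justifying the corollary's applicability on $\Delta'$, the step $\omega_f(2R_{\Delta'}r)\le 2R_{\Delta'}\,\omega_f(r)$ that moves the factor $2R_{\Delta'}$ outside the modulus, and the caveat that the limit claim requires a joint choice of $k,G,N$ controlling $N(L_Gd+1)$ --- all of which are legitimate and strengthen the argument.
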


\begin{proof}
According to \autoref{lem:error_f_fnn}, if we replace the function approximation set with $\mathcal{NN}(k, G, N)$, we obtain the following expression
\begin{equation*}
\left|f_{\mathcal{NN}}-f\right|_{2,h}<C\kappa_2
(\boldsymbol{A}_h)\left(h^p+e_\mathcal{NN}(k, G, N) \right).
\end{equation*}

By applying \autoref{cor:upperbound_of_Bspline_R}, we substitute $e_\mathcal{A}$ with the error $e_\mathcal{NN}(k, G, N) = 2R_{\Delta'}N(L_{G}d+1) \cdot \omega_{f}\left(\min \left\{\frac{1}{\sqrt{2k-2}}, \sqrt{\frac{k}{12}} \cdot \frac{1}{G} \right\}\right)$ of our two-layer B-spline KANs, where $R_{\Delta'}$ denotes the radius of the domain of the function we aim to approximate. Thus, the total error is given by

 \begin{equation*}
 \left|f_{\mathcal{NN}}-f\right|_{2,h}  < C\kappa_2
 (\boldsymbol{A}_h)\left(h^p+2R_{\Delta'}N(L_{G}d+1) \cdot  \omega_{f}\left(\min \left\{\frac{1}{\sqrt{2k-2}}, \sqrt{\frac{k}{12}} \cdot \frac{1}{G}\right\}\right) \right).
 \end{equation*}
 
where uniform boundedness of $\kappa_2
(\boldsymbol{A}_h)$ here is guaranteed by \autoref{lem:k2_bound}.
\end{proof}

Now that we have derived the approximation error of $ \boldsymbol{f}_{\mathcal{NN}} $ for approximating $\boldsymbol{f}$ , we proceed to analyze the error relationship between $ \boldsymbol{x}_{\mathcal{NN}}$, obtained by solving with the approximated $ \boldsymbol{f}_{\mathcal{NN}} $, and the exact solution $\boldsymbol{x}$ derived from the original function.

\begin{theorem}\label{thm:error_x_xnn}
Suppose $\boldsymbol{x}(t), \boldsymbol{x}_{\mathcal{NN}}(t) \in C^{\infty}([0, T])^d $ and $\boldsymbol{f}, \boldsymbol{f}_{\mathcal{NN}}$ is defined in $\Delta'$, a small neighborhood of $\Delta$, in the dynamical system (\ref{eq:IVP}), If the error between $\boldsymbol{f}$ and $\boldsymbol{f}_{\mathcal{NN}}$ is $\boldsymbol{\varepsilon}$, then the corresponding error between the solutions $\boldsymbol{x}(t)$ and $\boldsymbol{x}_{\mathcal{NN}}(t)$ is given by
\begin{equation}\label{eq:error_x_xnn}
\| \boldsymbol{x}(t)- \boldsymbol{x}_{\mathcal{NN}}(t) \| \leq \Big(\| \boldsymbol{x}(0)- \boldsymbol{x}_{\mathcal{NN}}(0) \|+ \boldsymbol{\varepsilon}t \Big)e^{L_{f}t}.
\end{equation}
\end{theorem}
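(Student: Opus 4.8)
The plan is to reduce the claim to a Grönwall-type argument applied to the difference of the integral forms of the two systems. First I would rewrite both dynamics as integral equations,
\begin{equation*}
\boldsymbol{x}(t) = \boldsymbol{x}(0) + \int_0^t \boldsymbol{f}(\boldsymbol{x}(s))\,\dif s, \qquad \boldsymbol{x}_{\mathcal{NN}}(t) = \boldsymbol{x}_{\mathcal{NN}}(0) + \int_0^t \boldsymbol{f}_{\mathcal{NN}}(\boldsymbol{x}_{\mathcal{NN}}(s))\,\dif s,
\end{equation*}
which is legitimate because both state functions are assumed to be $C^{\infty}$ on $[0,T]$ and hence are classical solutions of their respective systems. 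Subtracting the two identities and inserting the intermediate term $\boldsymbol{f}(\boldsymbol{x}_{\mathcal{NN}}(s))$ gives
\begin{equation*}
\boldsymbol{x}(t) - \boldsymbol{x}_{\mathcal{NN}}(t) = \big(\boldsymbol{x}(0) - \boldsymbol{x}_{\mathcal{NN}}(0)\big) + \int_0^t \Big[\big(\boldsymbol{f}(\boldsymbol{x}(s)) - \boldsymbol{f}(\boldsymbol{x}_{\mathcal{NN}}(s))\big) + \big(\boldsymbol{f}(\boldsymbol{x}_{\mathcal{NN}}(s)) - \boldsymbol{f}_{\mathcal{NN}}(\boldsymbol{x}_{\mathcal{NN}}(s))\big)\Big]\,\dif s.
\end{equation*}

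Next I would take norms and estimate the two brackets separately: the first by the Lipschitz continuity of $\boldsymbol{f}$ with constant $L_f$ on $\Delta'$, namely $\|\boldsymbol{f}(\boldsymbol{x}(s)) - \boldsymbol{f}(\boldsymbol{x}_{\mathcal{NN}}(s))\| \le L_f\|\boldsymbol{x}(s) - \boldsymbol{x}_{\mathcal{NN}}(s)\|$, and the second by the uniform bound $\|\boldsymbol{f} - \boldsymbol{f}_{\mathcal{NN}}\| \le \boldsymbol{\varepsilon}$. Writing $g(t) := \|\boldsymbol{x}(t) - \boldsymbol{x}_{\mathcal{NN}}(t)\|$, this yields the integral inequality $g(t) \le g(0) + \boldsymbol{\varepsilon}\,t + L_f\int_0^t g(s)\,\dif s$. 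Since the forcing term $a(t) := g(0) + \boldsymbol{\varepsilon}\,t$ is nondecreasing, the integral form of Grönwall's inequality gives $g(t) \le a(t)\,e^{L_f t}$, which is exactly \eqref{eq:error_x_xnn}.

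The Grönwall estimate itself is routine; the point that needs care is ensuring that the perturbed trajectory $\boldsymbol{x}_{\mathcal{NN}}(t)$ stays inside $\Delta'$ for every $t \in [0,T]$, so that the Lipschitz bound on $\boldsymbol{f}$ is genuinely applicable along $\boldsymbol{x}_{\mathcal{NN}}$. This is where the hypotheses that $\boldsymbol{f},\boldsymbol{f}_{\mathcal{NN}}$ are defined on the neighborhood $\Delta'$ and that both solutions are smooth on $[0,T]$ enter, and it may be handled either by assuming $\boldsymbol{\varepsilon}$ and the initial mismatch $\|\boldsymbol{x}(0)-\boldsymbol{x}_{\mathcal{NN}}(0)\|$ small enough, or by running the above argument on the maximal subinterval on which $\boldsymbol{x}_{\mathcal{NN}}$ remains in $\Delta'$ and then bootstrapping. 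I would also note the mild abuse of notation whereby the boldface $\boldsymbol{\varepsilon}$ is read as the scalar $\sup_{\Delta'}\|\boldsymbol{f}-\boldsymbol{f}_{\mathcal{NN}}\|$ in the inequality $\boldsymbol{\varepsilon}\,t$.
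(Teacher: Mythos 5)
Your proposal is correct and follows essentially the same route as the paper's proof: subtract the integral forms, insert the intermediate term $\boldsymbol{f}(\boldsymbol{x}_{\mathcal{NN}}(s))$, bound the two pieces by the Lipschitz constant $L_f$ and the uniform perturbation $\boldsymbol{\varepsilon}$, and conclude with Gr\"onwall's inequality applied to the nondecreasing forcing term. Your added remarks (keeping $\boldsymbol{x}_{\mathcal{NN}}$ inside $\Delta'$ so the Lipschitz bound applies, and reading $\boldsymbol{\varepsilon}$ as the scalar $\sup_{\Delta'}\|\boldsymbol{f}-\boldsymbol{f}_{\mathcal{NN}}\|$) address points the paper's proof glosses over.
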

\begin{proof}
Let $\boldsymbol{z}(t)=\boldsymbol{x}(t)- \boldsymbol{x}_{\mathcal{NN}}(t)$,
\begin{equation*}
\begin{aligned}
\frac{d \boldsymbol{z}(t)}{dt} & = \frac{d\boldsymbol{x}(t)}{dt} - \frac{d\boldsymbol{x}_{\mathcal{NN}}(t)}{dt} \\
& = \boldsymbol{f}(t, \boldsymbol{x}(t)) - \boldsymbol{f}_{\mathcal{NN}}(t, \boldsymbol{x}_{\mathcal{NN}}(t))) \\
& = \boldsymbol{f}(t, \boldsymbol{x}(t)) - (\boldsymbol{f}(t, \boldsymbol{x}_{\mathcal{NN}}(t)) + \boldsymbol{\varepsilon}) .
\end{aligned}
\end{equation*}
Integrating both sides of the above equation yields
\begin{equation*}
\begin{aligned}
\boldsymbol{z}(t) & = \boldsymbol{z}(0) + \int_0^t \Big(\boldsymbol{f}(s, \boldsymbol{x}(s)) -  \boldsymbol{f}(s, \boldsymbol{x}_{\mathcal{NN}}(s))\Big) ds + \int_0^t \boldsymbol{\varepsilon}ds   \\
 & = (\boldsymbol{z}(0) + \boldsymbol{\varepsilon}t) +\int_0^t \Big(\boldsymbol{f}(s, \boldsymbol{x}(s)) -  \boldsymbol{f}(s, \boldsymbol{x}_{\mathcal{NN}}(s))\Big) ds.
\end{aligned}
\end{equation*}
Taking norms on both sides of the above equation, one has
\begin{equation*}
\begin{aligned}
\| \boldsymbol{z}(t)\|  & \leq \| \boldsymbol{z}(0) + \boldsymbol{\varepsilon}t \| + \left\| \int_0^t \Big(\boldsymbol{f}(s, \boldsymbol{x}(s)) -  \boldsymbol{f}(s, \boldsymbol{x}_{\mathcal{NN}}(s))\Big) ds \right\|  \\
& \leq \| \boldsymbol{z}(0) + \boldsymbol{\varepsilon}t \| + L_{f} \int_0^t \left\| \boldsymbol{z}(s) \right\| ds.
\end{aligned}
\end{equation*}
It follows from Gronwall's inequality that
\begin{equation*}
\| \boldsymbol{z}(t)\| \leq \| \boldsymbol{z}(0) + \boldsymbol{\varepsilon}t \|e^{L_{f}t},
\end{equation*}
which implies \eqref{eq:error_x_xnn}. This completes the proof.
\end{proof}

\section{Numerical experiments}
This section presents a series of numerical experiments to substantiate our theoretical findings on uncovering hidden dynamics utilizing Linear Multistep Methods (LMMs) in conjunction with Kolmogorov-Arnold Networks (KANs). Drawing on precedents set by \cite{DuGuYangZhou,DDLMM2021,RaissiPerdikarisKarniadakis2018}, we examine three distinct examples: a simple linear ODE system, a glycolytic oscillator, and a model of opinion dynamics.

We assess the error orders of Adams-Bashforth (AB), Backward Differentiation Formula (BDF), and Adams-Moulton (AM) schemes within this context, with detailed coefficients referenced in \cite{DAmbrosio2023a}.

The experimental framework is outlined as follows:
\begin{itemize}
\item \textbf{Optimizer and Hyperparameters:} The optimization of network-based models is efficiently conducted using the Adam subroutine, accessible within the PyTorch library, which implements the algorithm detailed in \cite{KingmaBa2014}. Batch gradient descent is applied with a learning rate of 0.01.

\item \textbf{Network Configuration:} We engage two-layer KANs with B-spline activation functions for approximation purposes, initializing weights according to the Xavier uniform distribution as described in \cite{GlorotBengio2010}. Guided by the network error component within our theoretical results, we experiment with parameters  $k$ and $G$ to identify their  effective values.

\item \textbf{Data Generation:}  In subsequent examples, data generation is performed by solving dynamical systems numerically with \texttt{scipy.integrate.odeint}, which automatically switches between Adams methods for non-stiff problems and BDF for stiff problems. The solver uses \(\texttt{rtol}=10^{-13}\) and \(\texttt{atol}=10^{-13}\) to ensure high precision. By basing our dynamical discoveries on such precisely generated data, we can more accurately assess the validity and reliability of the vector field functions obtained through our fitting process.
\end{itemize}

\subsection{Linear ode system}
Let us consider the following model problem
\begin{equation}\label{1}
\left\{
\begin{aligned}
&\frac{dx}{dt}=2x+3y,\\
&\frac{dy}{dt}=-4y,
\end{aligned}\right.
\end{equation}
with the initial state \([x_0, y_0]^{\mathrm{T}} = [0, 1]^{\mathrm{T}}\). The state of the system is explicitly given by \(x(t) = \frac{1}{2}e^{2t} - \frac{1}{2}e^{-4t}\) and \(y(t) = e^{-4t}\). 

\begin{figure}[htp!]
\centering
\subfloat{\includegraphics[width=0.46\linewidth]{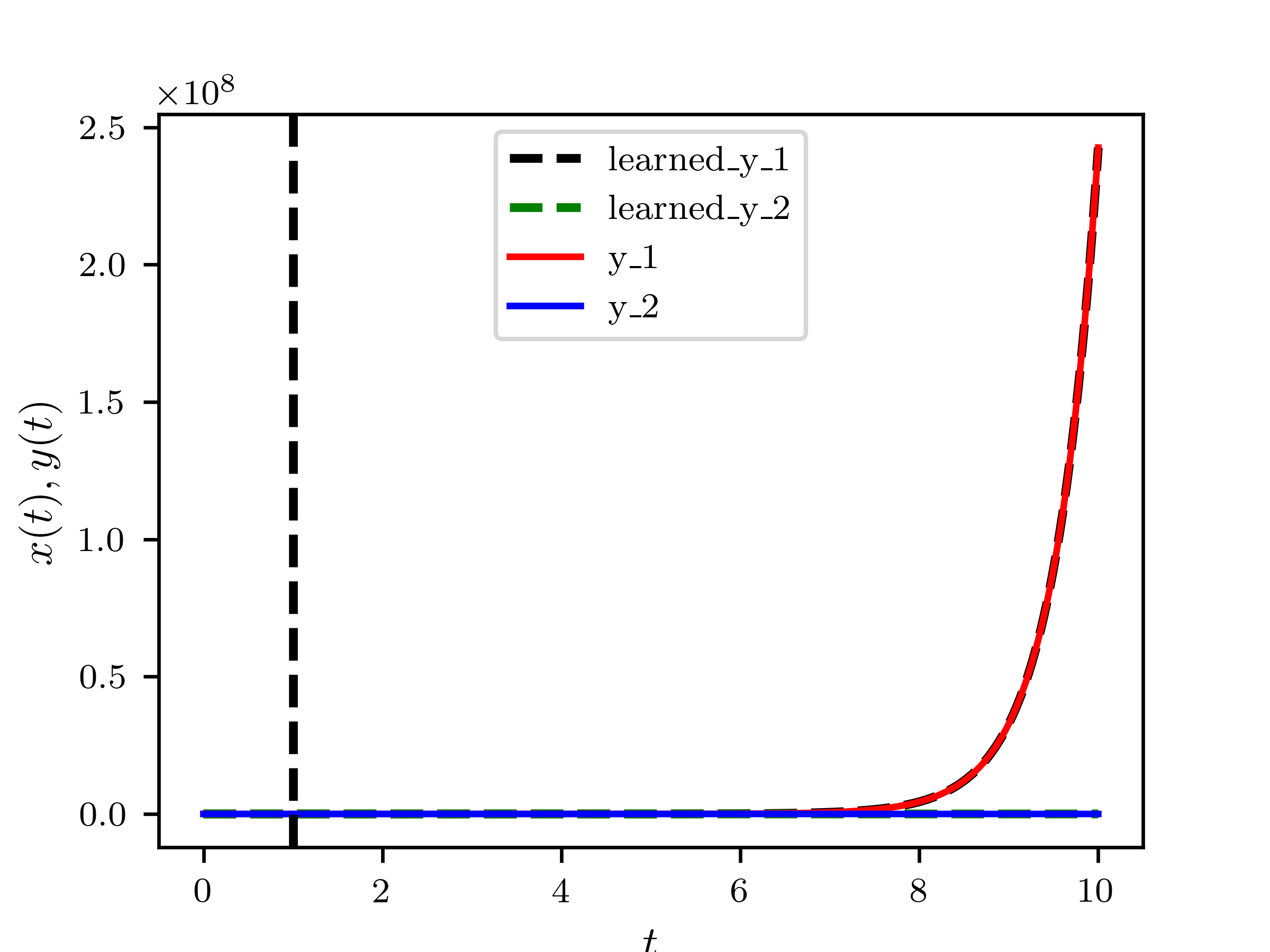}}
\quad
\subfloat{\includegraphics[width=0.46\linewidth]{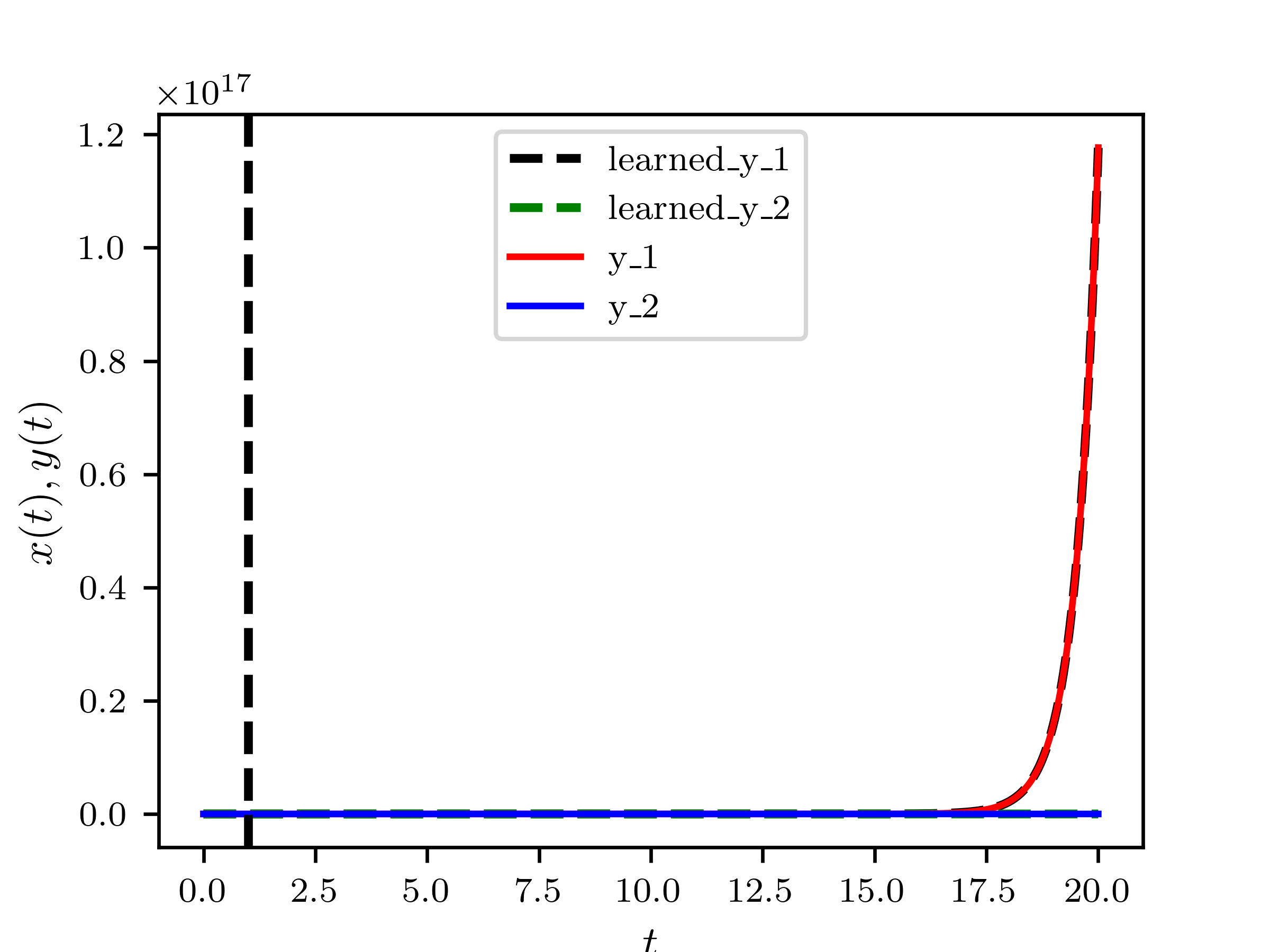}}
\caption{ Left panel: The black dashed vertical line at $t=1$ demarcates the boundary between the training interval $[0,1]$ and the prediction interval $[1,10]$. Right panel: The black dashed vertical line at $t=1$ demarcates the boundary between the training interval $[0,1]$ and the extended prediction interval
$[1,20]$.}
\label{fig:x_xnn1}
\end{figure}
\begin{figure}[htp!]
  \centering
  \includegraphics[width=0.8\linewidth]{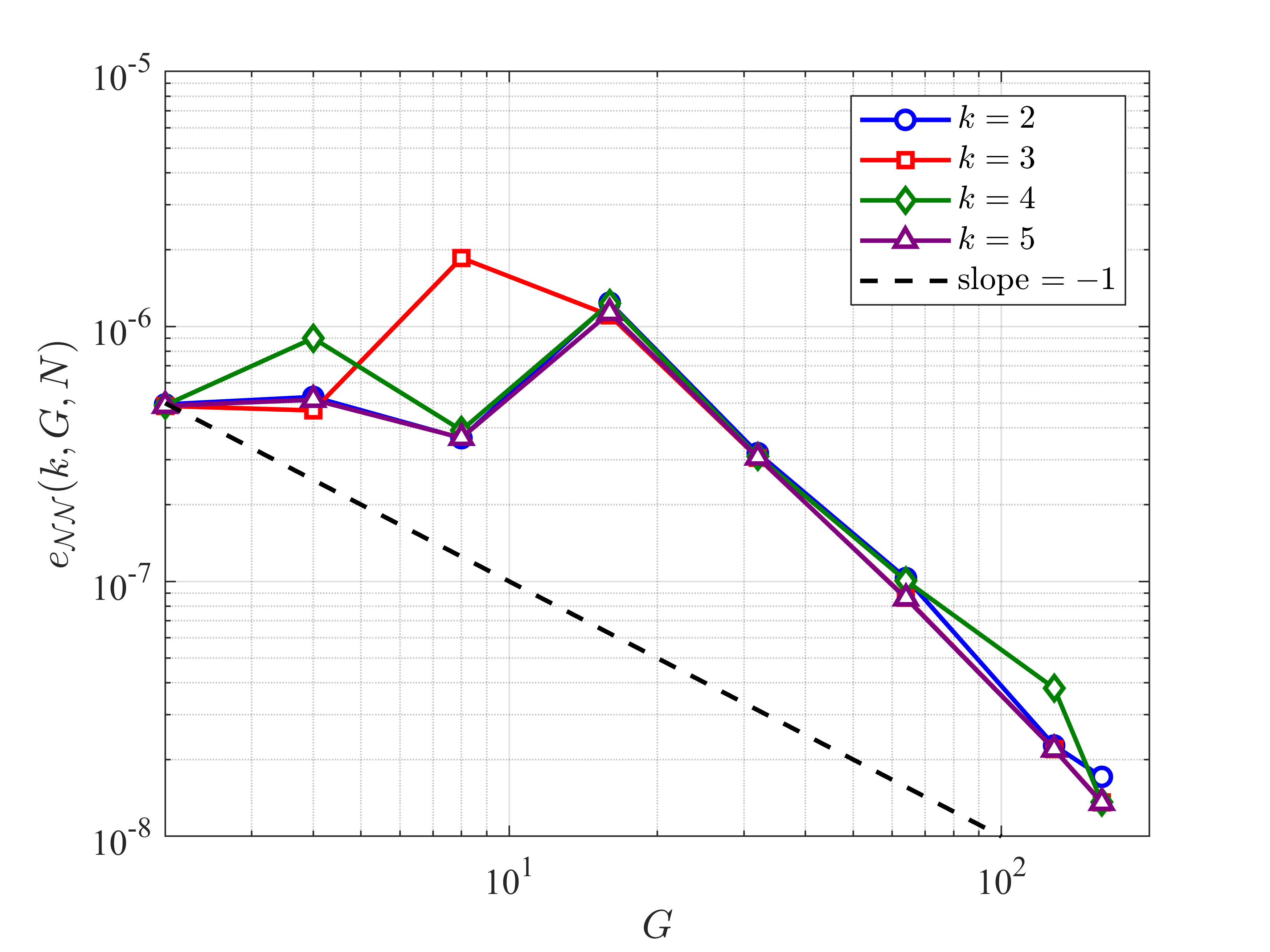}
  \caption{Influence of B-Spline KANs parameters: $e_\mathcal{NN}(k, G, N)$ as a function of degree $k$ and node number $G$.}
  \label{fig:ode_Error_kG}
\end{figure}
Firstly, we design an experiment to investigate the approximation capabilities of various multistep methods under identical network conditions. We systematically compare their performance, with numerical results detailed in Table 1. Notably, the second-order AM 1-step method outperforms others, achieving the lowest trajectory error and demonstrating superior stability, corroborating findings from \cite{DuGuYangZhou,RaissiPerdikarisKarniadakis2018}.
Subsequently, we focus on the impact of parameters  $k$ and $G$  of the B-spline basis functions on the KAN network error $e_\mathcal{NN}(k, G, N)$ under the AM 1-step scheme.
Experiments to generate \autoref{fig:ode_Error_kG} are conducted with B-spline degrees
$k=2,3,4,5$ and number of nodes
$G=4,8,16,32,64,128,160$ over the interval $[0,10]$ with a step size
$h=0.001$ for the AM scheme. The results demonstrate that when $k$ is relatively small, the parameter $G$
predominantly influences the error $e_\mathcal{NN}(k, G, N)$. \autoref{fig:ode_Error_kG} demonstrates that with the increase of $G$ towards infinity, the convergence rate of the network
 $\boldsymbol{f}_{\mathcal{NN}}$ approximating the vector field
$\boldsymbol{f}$ with respect to
$1/G$ exceeds 1. 
\begin{table}[H]
\centering
\caption{Error between $\boldsymbol{f}$ and $\boldsymbol{f}_{\mathcal{NN}}$
  for multistep methods with varying step numbers $M$ using a two-layer B-Spline KANs architecture.}
\label{tab:f_fnn_M}
\begin{tabular}{ | c | c | c | c  | c | c | c | }\hline
M-step   & 1-step & 2-step & 3-step & 4-step & 5-step & 6-step \\ \hline 
AB & 8.71e-08 & 3.46e-07 & 7.80e-07 & 1.39e-06 & 2.16e-06 &
3.12e-06\\
AM & 8.60e-08 & 3.44e-07 & 7.75e-07 & 2.15e-06 & 2.16e-06 &  3.11e-06 \\
BDF & 8.61e-08  & 1.54e-07 & 2.32e-07 & 3.20e-07 & 4.16e-07 & 5.21e-07 \\ \hline
\end{tabular}
\end{table}
\begin{figure}[htp!]
  \centering
  \includegraphics[width=0.8\linewidth]{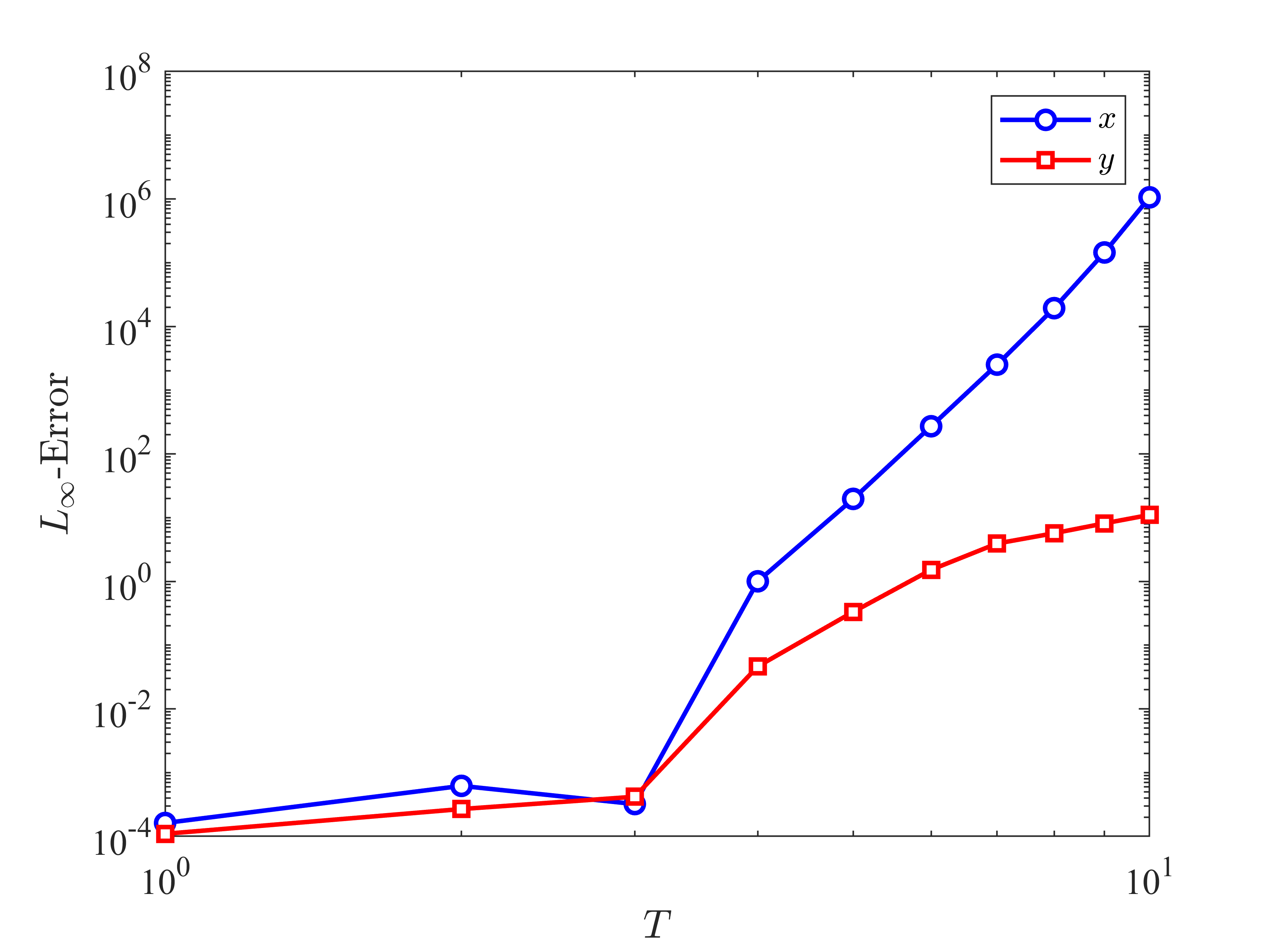}
  \caption{The $L^\infty$-norm error between $\boldsymbol{x}(t)$ and  $\boldsymbol{x}_{\mathcal{NN}}(t)$ over the interval $[0,T].$}
  \label{fig:xnn_t}
\end{figure}
Next, we examine the \autoref{thm:error_x_xnn}. In our numerical experiment, we utilize the previously determined network configuration, specified by the parameters $k=3,$ $G=64$, $h=10^{-3}$, and 2200 iterations with the AM 1-step method. We calculate the errors between the neural network approximation $\boldsymbol{x}_{\mathcal{NN}}$
  and the original function $\boldsymbol{x}$ for $T=1,2, \ldots, 10$. These results are depicted in \autoref{fig:xnn_t} using a log-log plot, showing that the errors for both solutions
$x$ and $y$ increase exponentially with $T$, consistent with our theoretical predictions.


\subsection{Glycolytic oscillator}
This subsection presents a simulation of the glycolytic oscillator model, exemplifying the complex nonlinear dynamics inherent in biological systems. The model is formulated by a system of ordinary differential equations that describe the concentrations of various species:
\begin{equation}\label{4}
\left\{
\begin{aligned}
&\frac{dS_1}{dt} = J_0 - \frac{k_1 S_1 S_6}{1 + \left(S_6 / K_1\right)^q},\\
&\frac{dS_2}{dt} = 2 \frac{k_1 S_1 S_6}{1 + \left(S_6 / K_1\right)^q} - k_2 S_2 (N - S_5) - k_6 S_2 S_5, \\
&\frac{dS_3}{dt} = k_2 S_2 (N - S_5) - k_3 S_3 (N - S_6),\\
&\frac{dS_4}{dt} = k_3 S_3 (A - S_6) - k_4 S_4 S_5 - \kappa (S_4 - S_7),\\
&\frac{dS_5}{dt} = k_2 S_2 (N - S_5) - k_4 S_4 S_5 - k_6 S_2 S_5,\\
&\frac{dS_6}{dt} = -2 \frac{k_1 S_1 S_6}{1 + \left(S_6 / K_1\right)^q} + 2 k_3 S_3 (A - S_6) - k_5 S_6,\\
&\frac{dS_7}{dt} = \psi \kappa (S_4 - S_7) - k_7 S_7
\end{aligned}\right.
\end{equation}
with initial condition $\mathcal{S}_0 =[1.125,0.95,0.075,0.16,0.265,0.7,0.092]$. Model parameters are detailed in \autoref{tab: parameters}.

\begin{table}[H]
\centering
\setlength{\tabcolsep}{10pt}
\renewcommand{\arraystretch}{1.2}
\caption{Parameters of the glycolytic oscillator}\label{tab: parameters}
\begin{tabular}{|c|c|c|c|c|c|c|}
\hline
$k_1$ & $k_2$ & $k_3$ & $k_4$ & $k_5$ & $k_6$ & $k_7$ \\
\hline
100.0 & 6.0 & 16.0 & 100.0 & 1.28 & 12.0 & 1.8  \\
\hline
$J_0$ & $\kappa$ & $q$ & $K_1$ & $\psi$ & $N$ & $A$ \\
\hline
2.5 & 13.0 & 4 & 0.52 & 0.1 & 1.0 & 4.0 \\
\hline
\end{tabular}
\end{table}

Employing the same network configuration as in the previous example ($k = 3$ , $G = 64$, $h = 10^{-3}$, 2200 iterations, AM 1-step), we calculate the errors between $\boldsymbol{x}_{\mathcal{NN}}$ and the original function over the interval [0,15]. As shown in \autoref{fig:Glycolytic_Error}, the learned system accurately captures the dynamics' form. We observe that for high-dimensional cases, our error remains small over an extended period, indicating the stability and reliability of our approach.
\begin{figure}[t!]
  \centering
  \includegraphics[width=1\linewidth]{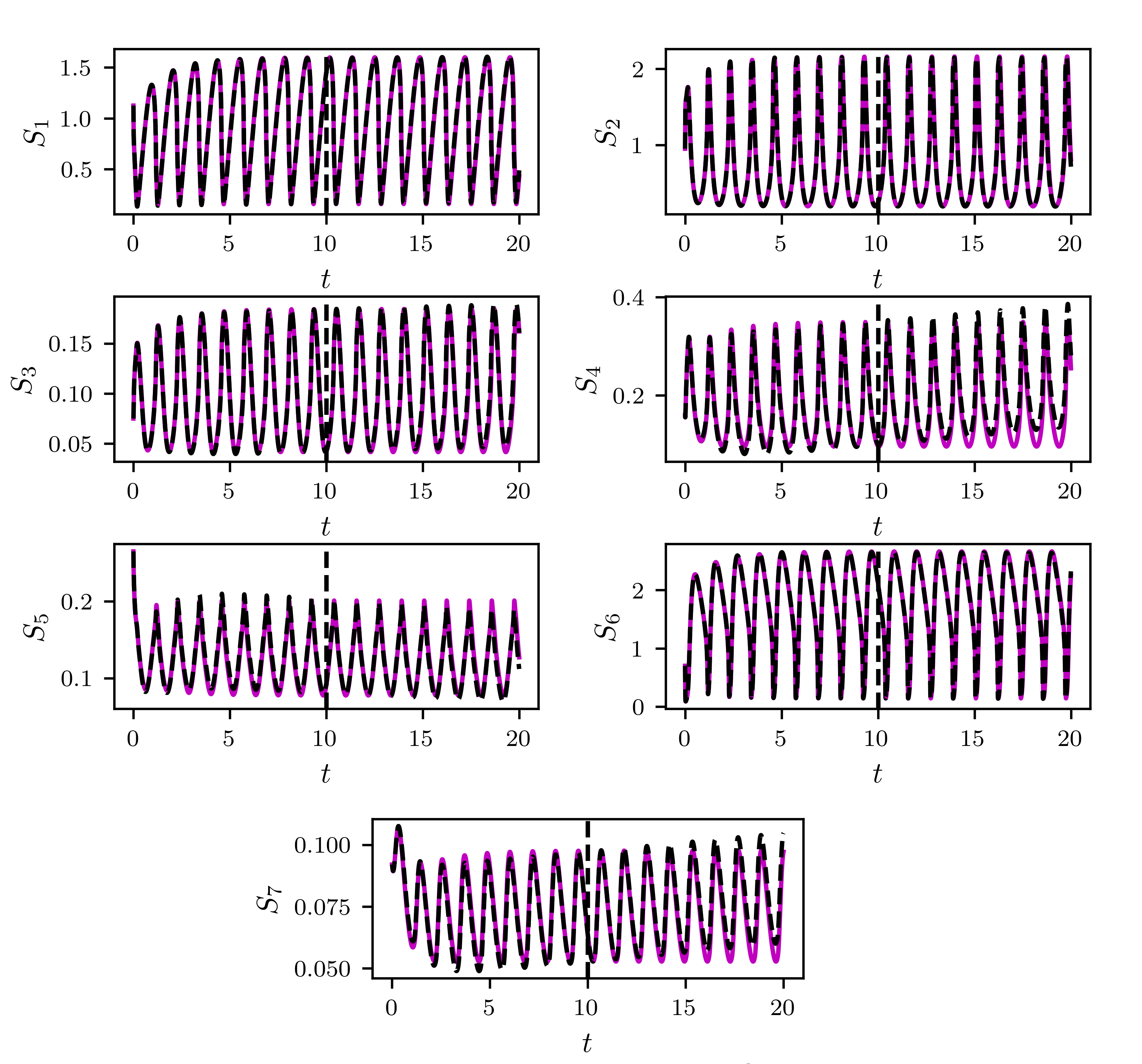}
  \caption{Comparison of trajectories for the Gylcolytic Oscillator: exact vs. learned dynamics. Solid blue lines indicate the exact dynamics, whereas dashed black lines represent the learned dynamics. The black dashed vertical line at $t=10$ demarcates the boundary between the `training' interval $[0,10]$ and the `prediction' interval $[10,20]$.}
  \label{fig:Glycolytic_Error}
\end{figure}

\subsection{Opinion dynamics}
This numerical experiment delves into the opinion dynamics model, a typical example within the realm of self-organized dynamical systems. The model is governed by the following system of equations:
\begin{equation}\label{eq:Opinion dynamics}
  \frac{d}{dt}\mathbf{x}_{i} = \alpha \sum_{j \neq i}a_{ij}(\mathbf{x}_{j}-\mathbf{x}_{i}), ~~ a_{ij}=\frac{\phi_{ij}}{\sum_{k}\phi_{ik}}, ~~ \phi_{ij}:=\phi(|\mathbf{x}_{j}-\mathbf{x}_{i}|).
\end{equation}

\begin{figure}[htp!]
\centering
\includegraphics[width=1\linewidth]{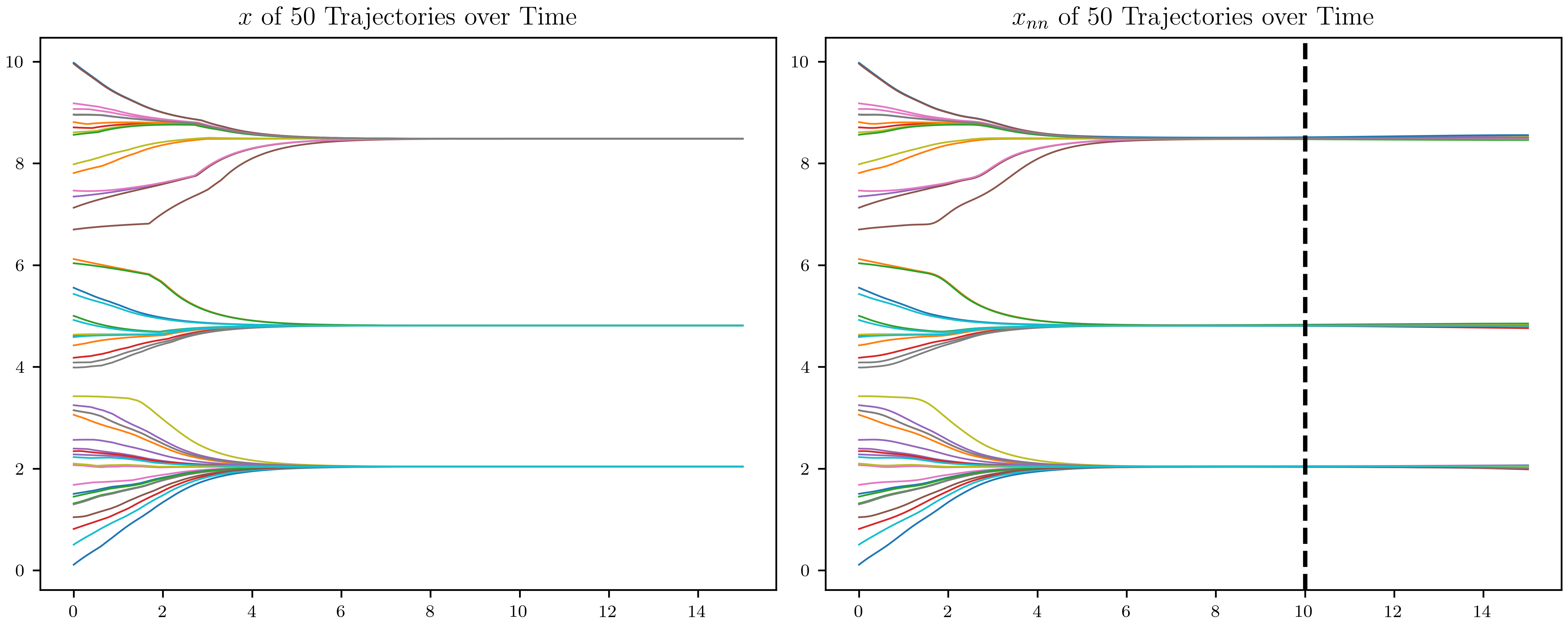}
\caption{A comparison of the temporal evolution of opinion dynamics as described by \eqref{eq:Opinion dynamics}, solved using a KAN, with the actual evolution. (Left figure: Actual evolution; Right figure: Network solutions.) The black dashed vertical line at $t=10$ demarcates the boundary between the `training' interval $[0,10]$ and the `prediction' interval $[10,15]$. }
\label{fig:Opinion dynamics_Slove_KAN}
\end{figure}

\begin{figure}[htp!]
\centering
\includegraphics[width=0.8\linewidth]{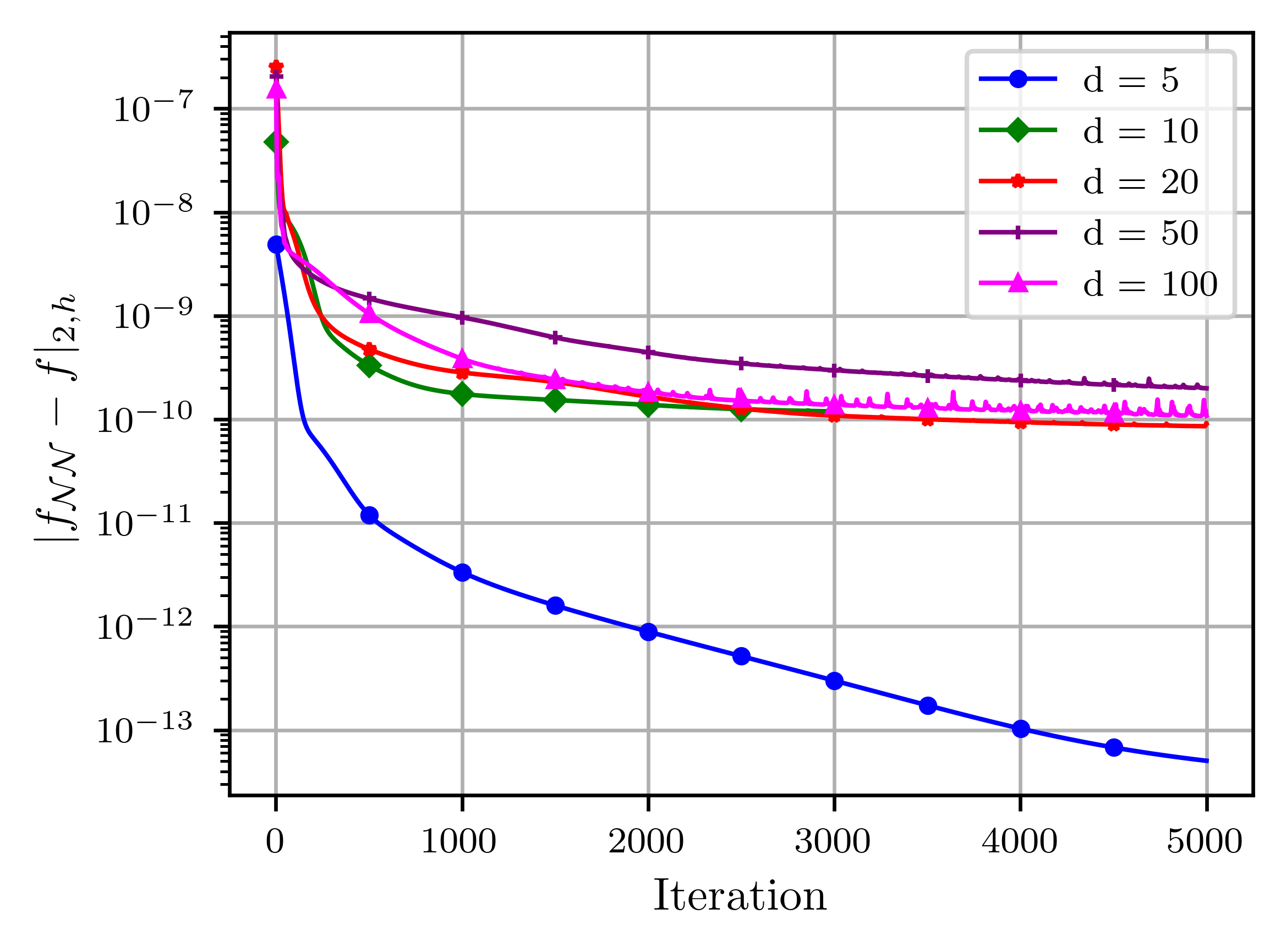}
\caption{Training Error between $\boldsymbol{f}$ and $\boldsymbol{f}_{\mathcal{NN}}$ of Opinion Dynamics Model at $T=10$.}
\label{fig:Opinion dynamics_Error_KAN}
\end{figure}

Here, \(\alpha > 0\) is a scaling parameter, and \(\phi\) is a scaled influence function defined as the characteristic function \(\phi=1\chi_{[0,1]}\), which operates on the "difference of opinions," denoted by \(|\mathbf{x}_{j}-\mathbf{x}_{i}|\). The metric \(|\cdot|\) signifies the absolute value.

We examine the system's behavior under random initial conditions across dimensions \(d = 50, 100, 200,\) and \(400\). The parameters for the KAN are set to \(k = 3\), \(G = 64\), and \(h = 10^{-3}\), with 3000 iterations per simulation. The Adams-Moulton method of order 1 is employed in conjunction with the KAN setup, with parameter arrays customized for different dimensions: [50 101 50], [100 201 100], [200 401 200], and [400 801 400].

\begin{table}[htp!]
\small
\centering
\renewcommand\arraystretch{1.2}
\caption{The $L^\infty$-norm error between the true state $\boldsymbol{x}(t)$ and the approximation  $\boldsymbol{x}_{\mathcal{NN}}(t)$ across the interval $[0,10].$ }
\label{tab:f_fnn_I}
\begin{tabular}{c|c|c|c|c}
\Xhline{2\arrayrulewidth}
$d$   &  50 & 100 & 200 & 400  \\
\hline
Errors  &  3.92960581e-02   & 1.62031044e-01  & 1.77045530e-01 & 1.71368496e-01 \\
\Xhline{2\arrayrulewidth}
\end{tabular}
\end{table}

A pivotal aspect of this study is the interplay between the KANs' approximation capability and the system's dimensionality \(d\). Our theoretical analysis indicates that the lower bound of the error, as quantified by the VC dimension, escalates with \(d\) but ultimately converges to a constant \(C\). This convergence suggests that the KAN model's approximation error is contained even amidst escalating system dimensionality, thereby ensuring the model's efficacy in high-dimensional scenarios. This attribute is indispensable for applications involving high-dimensional data, as it ascertains that the KAN model sustains a consistent approximation error margin. To substantiate these theoretical insights, we conduct a comprehensive error analysis for dimensions \(d = 50, 100, 200,\) and \(400\), as detailed in \autoref{tab:f_fnn_I}. \autoref{fig:Opinion dynamics_Slove_KAN} visually corroborates the algorithm's proficiency in approximating the 50-dimensional system over the interval \([0, 10]\) by contrasting it with the original orbit plot.

The graph in \autoref{fig:Opinion dynamics_Error_KAN} illustrates the convergence behavior of the error bounds for different dimensions
$d$ in the context of KANs. The error's lower bound is derived from the VC dimension. Conversely, the upper bound reflects the network's ability to approximate the target function.

For lower dimensions ($d=5$), the graph shows a rapid decrease in error as the network undergoes optimization, suggesting a strong approximation capability. As the dimension $d$ increases, the error's lower bound eventually approaching a fixed upper limit. This behavior is observed in the graph, where the error for higher dimensions ($d=10,20,50,100$) stabilizes at a nearly constant value. 

\bibliographystyle{abbrv}

\phantomsection
\addcontentsline{toc}{section}{\refname}
\bibliography{reference}

\end{document}